\newcommand{\Eset}{\mathbb{E}}
\newcommand{\Pset}{\mathbb{P}}
\newcommand{\Rset}{\mathbb{R}}
\newcommand{\Fcal}{{\cal F}}
\newcommand{\Gcal}{{\cal G}}
\newcommand{\Kcal}{{\cal K}}
\newcommand{\Ocal}{{\cal O}}
\newcommand{\Scal}{{\cal S}}
\newcommand{\Xcal}{{\cal X}}
\newcommand{\xhat}{{\hat{x}}}
\newtheorem{lem}{Lemma}
\newtheorem{thm}{Theorem}
\newtheorem{assump}{Assumption}
\title{Finite-Time Analysis of Stochastic Gradient Descent under Markov Randomness}
\author{
Thinh T. Doan \qquad Lam M. Nguyen
\qquad Nhan H. Pham \qquad
Justin Romberg\thanks{Thinh T. Doan and Justin Romberg are with the School of Electrical and Computer Engineering, Georgia Institute of Technology, GA, 30332, USA. Email: \{{\tt\small thinhdoan@gatech.edu, jrom@ece.gatech.edu}\}. Lam M. Nguyen is with IBM Research, Thomas J. Watson Research Center, Yorktown Heights, NY, USA. Email:  {\tt\small lamnguyen.mltd@ibm.com}. Nhan H. Pham is with the Department of Statistics and Operations Research, University of North Carolina at Chapel Hill, Chapel Hill, NC, USA. Email: {\tt\small nhanph@live.unc.edu}.}
}
\date{}
\begin{document}

\maketitle

\begin{abstract}
Motivated by broad applications in reinforcement learning and machine learning, this paper considers the popular stochastic gradient descent ({\sf SGD}) when the gradients of the underlying objective function are sampled from Markov processes.  This Markov sampling leads to the gradient samples being biased and not independent. The existing results for the convergence of {\sf SGD} under Markov randomness are often established under the assumptions on the boundedness of either the iterates or the gradient samples.  Our main focus is to study the finite-time convergence of {\sf SGD} for different types of the objective functions, without requiring these assumptions. We show that SGD converges nearly at the same rate with Markovian gradient samples as with independent gradient samples.  The only difference is a logarithmic factor that accounts for the mixing time of the Markov chain. 
\end{abstract}


\section{Introduction}\label{sec:intro}
Stochastic gradient descent ({\sf SGD}), originally introduced in \cite{RM1951} under the name of stochastic approximation {\sf SA}, is probably the most efficient and powerful method for solving optimization problems in machine learning. In this context, we want to optimize an (unknown) objective function $f$ defined over a large set of collected data, while we only have an access to the noisy samples of the gradient $\nabla f$ of $f$. At a point $x$, we observe a random vector $G(x, \xi)$, a sample of $\nabla f(x)$, where $\xi$ is some random variable.  Through a careful choice of step sizes, the ``noise'' induced by this randomness can be averaged out across iterations, and the algorithm converges to stationary point of $f$. Such convergence is established based on two standard assumptions, the gradient samples are i.i.d and unbiased, i.e., $\Eset[G(x,\xi)] = \nabla f(x)$ \cite{GhadimiL2013,bottou2016optimization,Nguyen2018_sgdhogwild,Nguyen2019_sgd_new_aspects}. For more details about the results of {\sf SGD} under this setting can be found in the recent review paper \cite{bottou2016optimization}.

In this paper, we show that a particular version of {\sf SGD} is still ergodic when the gradients of the underlying objective function are sampled from Markov processes, and hence are biased and not independent across iterations. It has been observed that the {\sf SGD} performs better when the gradients are sampled from Markov process as compared to i.i.d samples in both convex and nonconvex problems \cite{SunSY2018}. This model for the gradients has been considered previously in \cite{DuchiAJJ2012, SunSY2018,JohanssonRJ2010,RamNV2009,DoanNPR2020}, where different variants of {\sf SGD} are considered. In particular, the Markov subgradient incremental methods have been considered in \cite{RamNV2009,JohanssonRJ2010} motivated by the applications of distributed algorithms to minimize a sum of functions over a network of agents. At any iteration, an agent updates the current iterates using its own function, and either passes the new iterate to a randomly chosen neighbor or keeps updating. In \cite{DuchiAJJ2012}, the authors consider an ergodic version of the popular mirror descent and provide a finite-time analysis for the case of convex optimization problems. The authors in \cite{SunSY2018} study a non-ergodic version of {\sf SGD} for both convex and nonconvex problems when the Markov chains is non-reversible. They establish a convergence rate $\Ocal(1/k^{1-q}), q\in(1/2,1)$, which is worse than $\Ocal(1/\sqrt{k})$ under i.i.d sampling. Finally, the authors in \cite{DoanNPR2020} consider an accelerated ergodic version of {\sf SGD} for both nonconvex and convex problems and achieve the same rates as the ones using i.i.d samples, except for a log factor capturing the mixing time of the underlying Markov chain.         

The convergence analysis of the existing works listed above is established under the assumptions on the boundedness of the iterates, e.g., either the feasible set is compact or the gradient is bounded. Such assumptions may not be applicable in many settings, for example, in the context of reinforcement learning ({\sf RL}), which is the main motivation of this paper. Thus, our goal in this paper is to study the convergence rates of {\sf SGD} under Markov randomness for different settings under less restrictive assumptions as compared to the existing works \cite{DuchiAJJ2012, SunSY2018,JohanssonRJ2010,RamNV2009,DoanNPR2020}. We also note some relevant works in \cite{SrikantY2019_FiniteTD,ChenZDMC2019,Karimi_colt2019}, where the authors study finite-time analysis of stochastic approximation under Markov randomness. We, however, focus on the convergence rates of {\sf SGD} under different settings than the ones considered in \cite{SrikantY2019_FiniteTD,ChenZDMC2019,Karimi_colt2019}.    


\subsection{Main contributions}
We study the popular stochastic gradient descent where the gradients are sampled from
a Markov process. We show that, despite the gradients being biased and dependent across iterations, the convergence rate across many different types of objective functions (strongly convex, smooth and satisfying error bound conditions,
nonconvex and smooth) is within a logarithmic factor of the comparable bounds for independent gradients. This logarithmic factor is naturally related to the mixing time of the underlying Markov process generating the stochastic gradients. Our convergence analysis is established without requiring any assumption on the boundedness of the iterates or the gradients, as often assumed in the existing literature.

\section{Stochastic gradient descent under Markov randomness}
In this section, we present the main problem considered in this paper, which is inspired by the work in \cite{DuchiAJJ2012}. Specifically, we consider the following (possibly nonconvex) optimization problem
\begin{align}
\underset{x\in\Rset^{d}}{\text{minimize}}\; f(x) ,\label{sec_optimization:prob}
\end{align}
where $f:\Rset^{d}\rightarrow\Rset$ is given as
\begin{align}
f(x) \triangleq \Eset_{\mu}[\Gcal(x;\xi)] = \int_{\Scal}\Gcal(x;\xi)d\mu(\xi).\label{sec_optimization:notation_f}
\end{align}
Here $\Scal$ is a finite statistical sample space with probability distribution $\mu$ and $\Gcal(\cdot,\xi):\Scal\rightarrow\Rset$ is a bounded below (possibly nonconvex) function  associated with $\xi\in\Scal$. We are interested in the first-order stochastic optimization methods, specifically the popular {\sf SGD}, for solving problem \eqref{sec_optimization:prob}. In particular, we assume that we only have access to $G(x;\xi)\in\partial \Gcal(x;\xi)$, the (sub)gradient of $\Gcal(\cdot;\xi)$ evaluated at the point $x$. Starting with an arbitrarily initial condition $x_{0}\in\Xcal$, {\sf SGD} iteratively updates the iterates as
\begin{align}
x_{k+1} = x_{k} - \alpha_{k} G(x_{k},\xi_{k}), \label{sec_SGD:alg}
\end{align}
where $\alpha_{k}$ is some nonincreasing and nonnegative time-varying step sizes. We consider in this paper the case where the samples $\xi$ are generated from an ergodic Markov process, whose stationary distribution is $\mu$. Our focus is to study the finite-time analysis of {\sf SGD} under such Markov randomness on different assumptions of the objective function $f$. To do that, we introduce the following technical assumptions used throughout this paper.   

\subsection{Assumptions and Preliminaries}
Given a vector $x\in\Rset^{d}$, we associate with it a norm $\|\cdot\|$ and denote its dual norm as $\|\cdot\|_{*}$, i.e., 
\begin{align*}
\|x\|_{*} = \sup_{\|y\|\leq 1}\langle x,y\rangle,
\end{align*}
where $\langle \cdot,\cdot\rangle$ denotes the inner product. Using the Cauchy-Schwarz inequality we obtain
\begin{align}
|\langle x,y\rangle| \leq \|x\|_{*}\|y\|.     
\end{align}
Moreover, by \eqref{sec_SGD:alg} we have
\begin{align}
\|x_{k+1}-x_{k}\| \leq \alpha_{k}\|G(x_{k},\xi_{k})\|_{*}.\label{sec_SGD_sc:lem_xk_ineq}    
\end{align}
We first assume that problem \eqref{sec_optimization:prob} is well-defined. 
\begin{assump}\label{assump:optimal_set}
The optimal set $\Xcal^*$ of \eqref{sec_optimization:prob} is nonempty, i.e., there exists $x^*$ s.t. $f(x^*) = \min_{x} f(x)$. 
\end{assump}
We consider the following fairly standard technical assumptions about the Markov process, which are often assumed in the existing literature  \cite{DuchiAJJ2012, SunSY2018, SrikantY2019_FiniteTD,ChenZDMC2019, Doan2019, DoanNPR2020, DoanMR2019_DTD}.   

\begin{assump}\label{assump:stationary}
The sequence $\{\xi_{k}\}$ is a Markov chain with a finite state space $\Scal$. In addition, the following limits exit
\begin{align}
\lim_{k\rightarrow\infty}
\Eset[G(x;\xi_{k})] = g(x)\in\partial f(x)\quad \forall x.
\end{align}
\end{assump}
\begin{assump}\label{assump:mix}
Given a positive constant $\alpha$, we denote by $\tau(\alpha)$ the mixing time of the Markov chain $\{\xi_{k}\}$. We assume that for $g(x) \in \partial f(x)$ and $\forall\;\xi\in\Scal$
\begin{align*}
&\left\|\Eset[G(x;\xi_{k})] - g(x)\,|\, \xi_{0} = \xi\right\|_{*} \leq \alpha,\quad \forall  k\geq \tau(\alpha).
\end{align*}
Moreover, $\{\xi_{k}\}$ has a geometric mixing time, i.e., there exists a positive constant $C$ such that \begin{align}
\tau(\alpha) = C\log\left(\frac{1}{\alpha}\right).\label{notation:tau}
\end{align} 
\end{assump} 
Next we assume that $G(x,\cdot)$ is uniformly Lipschitz continuous, which is satisfied in the context of {\sf RL} studied in \cite{ChenZDMC2019,SrikantY2019_FiniteTD}. Moreover, this condition is weaker than the boundedness assumptions in \cite{DuchiAJJ2012, SunSY2018,JohanssonRJ2010,RamNV2009,DoanNPR2020}.

\begin{assump}\label{assump:Lipschitz_sample}
There exists a constant $M$ such that for all $\xi\in\Scal$
\begin{align}
\|G(x,\xi)\|_{*}   \leq M (\|x\|+1)\qquad \text{and}\qquad \|g(x)\|_{*}   \leq M (\|x\|+1),\qquad \forall x\in\Xcal, g(x)\in\partial f(x).  \label{assump:Lipschitz_sample:ineq}
\end{align}
\end{assump}
For convenience, we introduce the following notation
\begin{align}
\alpha_{k;\tau(\alpha_{k})} =     \sum_{u = k-\tau(\alpha_{k})}^{k-1}\alpha_{u},\label{sec_SGD_sc:notation_alpha_tau}
\end{align}
where $\tau(\alpha_{k})$ is the mixing time defined in Assumption \ref{assump:mix} associated with the step size $\alpha_{k}$. Note that $\tau(\alpha_{k}) = \log(1/\alpha_{k})$ . Moreover, we consider in this paper diminishing step size $\alpha_{k}$, therefore, $\lim_{k\rightarrow\infty}\tau(\alpha_{k})\alpha_{k} = 0$. This implies that there exists a positive integer $\Kcal^*$ such that
\begin{align}
\alpha_{k;\tau(\alpha_{k})}\leq \tau(\alpha_{k})\alpha_{k-\tau(\alpha_{k})}\leq \frac{\log(2)}{M},\qquad \forall k \geq \Kcal^*.  \label{sec_SGD:notation_K*}
\end{align} 
Finally, we consider the following important lemma to provide an upper bound of the iterates generated by {\sf SGD}. This result is used throughout this paper to derive the rates of {\sf SGD}.
\begin{lem}\label{sec_SGD_sc:lem_xk_bounded}
Suppose that Assumptions \ref{assump:stationary}--\ref{assump:Lipschitz_sample} hold. Let $\{x_{k}\}$ be generated by \eqref{sec_SGD:alg}. Then for all $k\geq \Kcal^*$
\begin{align}
&\|x_{k}-x_{k-\tau(\alpha_{k})}\|\leq 6M\alpha_{k;\tau(\alpha_{k})}\|x_{k}\| + 6M\alpha_{k;\tau(\alpha_{k})}\leq 2\|x_{k}\| + 2.   \label{sec_SGD_sc:lem_xk_bounded_ineq2}
\end{align}
\end{lem}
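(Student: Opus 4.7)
The plan is to prove the estimate by telescoping $\|x_k - x_{k-\tau(\alpha_k)}\|$ into a sum of one-step differences, then handling the fact that the one-step bounds involve $\|x_i\|$ for intermediate indices $i\in[k-\tau(\alpha_k),k]$, which must themselves be controlled by $\|x_k\|$. The key mechanism is a fixed-point / take-the-max argument that exploits the smallness of $\alpha_{k;\tau(\alpha_k)}$ guaranteed by the choice of $\mathcal{K}^*$ in \eqref{sec_SGD:notation_K*}.

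First, I would combine the one-step bound \eqref{sec_SGD_sc:lem_xk_ineq} with Assumption~\ref{assump:Lipschitz_sample} to obtain $\|x_{i+1}-x_i\| \leq M\alpha_i(\|x_i\|+1)$, and then by the triangle inequality, for any $i\in[k-\tau(\alpha_k),k]$,
\begin{align*}
\|x_i - x_k\| \;\leq\; \sum_{u=\min(i,k)}^{\max(i,k)-1}\alpha_u M(\|x_u\|+1) \;\leq\; M\alpha_{k;\tau(\alpha_k)}\bigl(S+1\bigr),
\end{align*}
where $S \triangleq \max_{i\in[k-\tau(\alpha_k),k]}\|x_i\|$. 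This relies on $\alpha_u$ being nonincreasing so that the partial sums are dominated by $\alpha_{k;\tau(\alpha_k)}$ as defined in \eqref{sec_SGD_sc:notation_alpha_tau}.

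Next, a triangle inequality gives $\|x_i\|\leq \|x_k\| + M\alpha_{k;\tau(\alpha_k)}(S+1)$ uniformly for $i$ in the window, so taking the maximum over $i$ yields
\begin{align*}
S\bigl(1 - M\alpha_{k;\tau(\alpha_k)}\bigr) \;\leq\; \|x_k\| + M\alpha_{k;\tau(\alpha_k)}.
\end{align*}
For $k\geq\mathcal{K}^*$, the bound \eqref{sec_SGD:notation_K*} guarantees $M\alpha_{k;\tau(\alpha_k)}\leq \log(2)<1$, so I can divide through and simplify the right-hand side, getting $S+1 \leq (\|x_k\|+1)/(1-\log(2))$. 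Substituting back into the first display then yields
\begin{align*}
\|x_k - x_{k-\tau(\alpha_k)}\| \;\leq\; \frac{M\alpha_{k;\tau(\alpha_k)}}{1-\log(2)}\bigl(\|x_k\|+1\bigr),
\end{align*}
and since $1/(1-\log(2)) < 6$, this gives the first inequality. The second inequality follows directly from \eqref{sec_SGD:notation_K*} (possibly with $\mathcal{K}^*$ chosen so that $6M\alpha_{k;\tau(\alpha_k)}\leq 2$, which is a strengthening of the bound already used and entails no additional difficulty since $\alpha_k\to 0$).

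The main obstacle is the implicit/recursive character of the estimate: each $\|x_i\|$ in the window is bounded only through the other iterates in the same window, so a naive telescoping would blow up. The max-over-the-window trick together with the smallness condition $M\alpha_{k;\tau(\alpha_k)}<1$ is the essential device that closes the loop and produces a clean bound in terms of $\|x_k\|$ alone, which is what makes the lemma usable in all subsequent convergence arguments where the iterates have not yet been shown to be bounded.
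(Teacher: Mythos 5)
Your proof is correct, and it reaches the stated bound by a genuinely different (and slightly cleaner) route than the paper. The paper unrolls the one-step recursion $\|x_{t+1}\|\leq(1+M\alpha_t)\|x_t\|+M\alpha_t$ forward from $k-\tau(\alpha_k)$, bounds the resulting products via $1+x\leq e^{x}$ and $\exp\{M\tau(\alpha_k)\alpha_{k-\tau(\alpha_k)}\}\leq 2$ to get $\|x_t\|\leq 2\|x_{k-\tau(\alpha_k)}\|+2M\alpha_{k;\tau(\alpha_k)}$ on the window, telescopes to bound $\|x_k-x_{k-\tau(\alpha_k)}\|$ in terms of $\|x_{k-\tau(\alpha_k)}\|$, and only at the end flips to $\|x_k\|$ by the triangle inequality, absorbing the small multiple of $\|x_k-x_{k-\tau(\alpha_k)}\|$ into the left-hand side. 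You instead introduce the window maximum $S$ and close one self-referential inequality $S(1-M\alpha_{k;\tau(\alpha_k)})\leq\|x_k\|+M\alpha_{k;\tau(\alpha_k)}$; this skips the product-to-exponential step and yields the sharper constant $1/(1-\log 2)<6$. The essential device---absorbing a term proportional to the unknown by exploiting the smallness of $M\alpha_{k;\tau(\alpha_k)}$ guaranteed by \eqref{sec_SGD:notation_K*}---is the same in both arguments. Two minor remarks: your telescoping step needs only nonnegativity of the $\alpha_u$ (any partial sum over a subwindow is dominated by the full sum $\alpha_{k;\tau(\alpha_k)}$), not monotonicity; and you are right to flag that the final inequality $6M\alpha_{k;\tau(\alpha_k)}\leq 2$ needs $M\alpha_{k;\tau(\alpha_k)}\leq 1/3$, which is slightly stronger than $\leq\log(2)$ when the logarithm is natural---a looseness the paper itself shares, as its proof invokes ``$\log(2)\leq 1/3$''.
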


\begin{proof}
Using \eqref{assump:Lipschitz_sample:ineq}, \eqref{sec_SGD_sc:lem_xk_ineq}, and the triangle inequality we have
\begin{align}
\|x_{k+1}\| - \|x_{k}\| \leq \|x_{k+1} - x_{k}\| \leq \alpha_{k}\|G(x_{k},\xi_{k})\|_{*} \leq M\alpha_{k}(\|x_{k}\|+1),\label{sec_SGD_sc:lem_xk_bounded_Eq1a}    
\end{align}
which gives
\begin{align*}
\|x_{k+1}\| \leq (1+M\alpha_{k})\|x_{k}\| + M\alpha_{k}.
\end{align*}
Using the relation $1+x \leq e^{x}$ for all $x\geq 0$, the preceding relation gives for all $t\in[k-\tau(\alpha_{k}),k-1]$
\begin{align*}
\|x_{t}\| &\leq \prod_{u = k-\tau(\alpha_{k})}^{t-1}(1+M\alpha_{u})\|x_{k-\tau(\alpha_{k})}\| + M\sum_{u = k-\tau(\alpha_{k})}^{t-1}\alpha_{u}\prod_{\ell=u+1}^{t-1}(1 + M\alpha_{\ell})\notag\\
&\leq \exp\left\{M\sum_{u=k-\tau(\alpha_{k})}^{t-1}\alpha_{u}\right\}\|x_{k-\tau(\alpha_{k})}\| + M\sum_{u = k-\tau(\alpha_{k})}^{t-1}\alpha_{u}\exp\left\{M\sum_{\ell = u+1}^{t-1}\alpha_{\ell}\right\}\notag\\
&\leq \exp\left\{M\tau(\alpha_{k})\alpha_{k-\tau(\alpha_{k})}\right\}\|x_{k-\tau(\alpha_{k})}\| + M\sum_{u = k-\tau(\alpha_{k})}^{t-1}\alpha_{u}\exp\left\{M\tau(\alpha_{k})\alpha_{k-\tau(\alpha_{k})}\right\}.
\end{align*}
Recall from \eqref{sec_SGD:notation_K*} that for all $k\geq \Kcal^*$
\begin{align*}
M\tau(\alpha_{k})\alpha_{k-\tau(\alpha_{k})}\leq \log(2) \leq \frac{1}{2}\cdot
\end{align*}
Thus, by using \eqref{sec_SGD_sc:notation_alpha_tau} we have from the equation above for all $t\in[k-\tau(\alpha_{k}),k-1]$
\begin{align*}
\|x_{t}\| &\leq  2\|x_{k-\tau(\alpha_{k})}\| + 2M\sum_{u = k-\tau(\alpha_{k})}^{t-1}\alpha_{u} = 2\|x_{k-\tau(\alpha_{k})}\| + 2M\alpha_{k;\tau(\alpha_{k})}.
\end{align*}
Using the preceding relation we have from \eqref{sec_SGD_sc:lem_xk_bounded_Eq1a} for all $k\geq \Kcal^*$
\begin{align*}
\|x_{k}-x_{k-\tau(\alpha_{k})}\|&\leq \sum_{t=k-\tau(\alpha_{k})}^{k-1}\|x_{t+1} - x_{t}\|\leq \sum_{t=k-\tau(\alpha_{k})}^{k-1}M\alpha_{t}(\|x_{t}\| + 1)\notag\\
&\leq \sum_{t=k-\tau(\alpha_{k})}^{k-1}L\alpha_{t}\left(2\|x_{k-\tau(\alpha_{k})}\| + 2M\alpha_{k;\tau(\alpha_{k})}\right) + M\alpha_{k;\tau(\alpha_{k})}\notag\\
&\leq 2M\alpha_{k;\tau(\alpha_{k})}\|x_{k-\tau(\alpha_{k})}\| + 2M\alpha_{k;\tau(\alpha_{k})},
\end{align*}
where the last inequality is due to \eqref{sec_SGD:notation_K*}. Using the preceding inequality and the triangle inequality yields
\begin{align*}
\|x_{k}-x_{k-\tau(\alpha_{k})}\|&\leq 2M\alpha_{k;\tau(\alpha_{k})}\|x_{k} - x_{k-\tau(\alpha_{k})}\| + 2M\alpha_{k;\tau(\alpha_{k})}\|x_{k}\| + 2M\alpha_{k;\tau(\alpha_{k})}\notag\\
&\leq \frac{2}{3}\|x_{k} - x_{k-\tau(\alpha_{k})}\| + 2M\alpha_{k;\tau(\alpha_{k})}\|x_{k}\| + 2M\alpha_{k;\tau(\alpha_{k})},
\end{align*}
where the last inequality we use \eqref{sec_SGD:notation_K*} to  have $L\alpha_{k;\tau(\alpha_{k})}\leq \log(2)\leq 1/3$. Rearranging the equation above yields \eqref{sec_SGD_sc:lem_xk_bounded_ineq2}
\begin{align*}
 \|x_{k}-x_{k-\tau(\alpha_{k})}\| \leq  6M\alpha_{k;\tau(\alpha_{k})}\|x_{k}\| + 6M\alpha_{k;\tau(\alpha_{k})}\leq 2\|x_{k}\| + 2.  
\end{align*}
\end{proof}

\section{Finite-time analysis of {\sf SGD} under Markov randomness}\label{sec:SGD:analysis}
In this section, we focus on analyzing the finite-time performance of {\sf SGD} under Markov randomness for different assumptions on the objective function $f$. In particular, we consider three different cases, namely, $f$ is strongly convex, $f$ is smooth and satisfies an error bound condition, and $f$ is only smooth and nonconvex. A key observation of our results is that {\sf SGD} under Markov randomness has the same finite-time convergence as the one under i.i.d samples and unbiased estimate, except for a $\log$ factor capturing the impact of geometric mixing time of the underlying Markov processes. More details are given as below.    

\subsection{Convergence analysis: strong convexity}
We study here the convergence rates of {\sf SGD} \eqref{sec_SGD:alg} when the function $f$ is strongly convex, i.e., we consider the following assumption. 
\begin{assump}\label{assump:sc}
There exists a constant $\sigma > 0$ such that $f$ satisfies
\begin{align}
f(x) - f(y) - \langle g(y),x-y\rangle \geq \frac{\sigma}{2}\|x-y\|^2,\qquad g(y) \in\partial f(y).     \label{assump_sc:ineq}
\end{align}
\end{assump}
Under this assumption, there exists a unique optimal solution $x^*$ of \eqref{sec_optimization:prob}, i.e., $\Xcal^* = {x^*}$. In this section, we consider the norm induced by the inner product, so, $\|\cdot\|$ and $\|\cdot\|_{*}$ are the same.  Recall that $\Kcal^*$ is a positive integer satisfying \eqref{sec_SGD:notation_K*}. With some abuse of notation, we consider in this section  $\Kcal^*$ satisfying
\begin{align}
\left\{\begin{array}{l}
\tau(\alpha_{k})\alpha_{k-\tau(\alpha_{k})}\leq \max\left\{\frac{\log(2)}{M},\;\frac{\sigma}{8(25M^2+1)}\right\}\\
\ln\left(\frac{\sigma k}{4}\right)\leq \frac{k}{2}
\end{array}\right.\qquad \forall k \geq \Kcal^*,   \label{sec_SGD_sc:notation_K*}
\end{align} 
which also satisfies \eqref{sec_SGD:notation_K*}. Our main result in this section is to show that $x_{k}$ converges to $x^*$ in expectation at a rate
\begin{align}
\Eset\left[\|x_{k}-x^*\|^2\right]  \lesssim \Ocal\left(\frac{\Eset\left[\|x_{\Kcal^*}-x^*\|^2\right]}{k^2} + \frac{\log(k)}{k}\right).  
\end{align}
To derive this rate, we  consider the following two key lemmas. The first lemma is to provide an upper bound on the function value. 
\begin{lem}\label{sec_SGD_sc:lem_f_fk}
Suppose that Assumptions \ref{assump:sc} holds. Let $\{x_{k}\}$ be generated by \eqref{sec_SGD:alg}. Then we have for any $x$ 
\begin{align}
f(x_{k}) - f(x) &\leq \frac{1-\sigma\alpha_{k}}{2\alpha_{k}}\|x-x_{k}\|^2 - \frac{1}{2\alpha_{k}} \|x-x_{k+1}\|^2 + L^2\alpha_{k}(\|x_{k}\|^2+1)\notag\\ 
&\qquad  
-  \langle G(x_k;\xi_{k}) - g(x_{k}), x_{k} -  x\rangle \rangle.\label{sec_SGD_sc:lem_f_fk_bounded:ineq}
\end{align}
\end{lem}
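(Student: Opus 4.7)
The proof plan follows the classical descent-lemma template for {\sf SGD}, with the crucial twist that I do not try to handle the bias term $G(x_k;\xi_k)-g(x_k)$ inside this lemma: it is simply carried along as an inner product on the right-hand side, to be controlled later using the mixing-time bound of Assumption~\ref{assump:mix} together with Lemma~\ref{sec_SGD_sc:lem_xk_bounded}. Consequently, no probabilistic properties of $\{\xi_k\}$ are needed here; only the update rule \eqref{sec_SGD:alg}, Assumption~\ref{assump:sc}, and Assumption~\ref{assump:Lipschitz_sample} will be used.

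First, I would square both sides of \eqref{sec_SGD:alg} around an arbitrary $x$ and expand:
\begin{align*}
\|x_{k+1}-x\|^2 = \|x_k-x\|^2 - 2\alpha_k\langle G(x_k;\xi_k),\, x_k-x\rangle + \alpha_k^2\|G(x_k;\xi_k)\|^2.
\end{align*}
Solving for the inner product and splitting $G(x_k;\xi_k) = g(x_k) + (G(x_k;\xi_k)-g(x_k))$ gives the identity
\begin{align*}
\langle g(x_k),\, x_k-x\rangle = \frac{\|x_k-x\|^2 - \|x_{k+1}-x\|^2}{2\alpha_k} + \frac{\alpha_k}{2}\|G(x_k;\xi_k)\|^2 - \langle G(x_k;\xi_k)-g(x_k),\, x_k-x\rangle.
\end{align*}

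Second, I would invoke strong convexity (Assumption~\ref{assump:sc}) in the form
\begin{align*}
\langle g(x_k),\, x_k - x\rangle \geq f(x_k) - f(x) + \frac{\sigma}{2}\|x-x_k\|^2,
\end{align*}
substitute this into the left side of the previous identity, and rearrange to isolate $f(x_k)-f(x)$. This already yields the stated bound \eqref{sec_SGD_sc:lem_f_fk_bounded:ineq} except for the quadratic noise term, which still appears in the form $\tfrac{\alpha_k}{2}\|G(x_k;\xi_k)\|^2$; the $\tfrac{\sigma}{2}\|x-x_k\|^2$ contribution combines with $\tfrac{1}{2\alpha_k}\|x-x_k\|^2$ to produce the coefficient $(1-\sigma\alpha_k)/(2\alpha_k)$ on the first term.

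Finally, I would apply Assumption~\ref{assump:Lipschitz_sample} to control the quadratic noise term: since $\|G(x_k;\xi_k)\|_{*} \leq M(\|x_k\|+1)$, we have $\|G(x_k;\xi_k)\|_{*}^2 \leq 2M^2(\|x_k\|^2+1)$, so $\tfrac{\alpha_k}{2}\|G(x_k;\xi_k)\|_{*}^2 \leq M^2\alpha_k(\|x_k\|^2+1)$, matching the $L^2\alpha_k(\|x_k\|^2+1)$ term in the statement (with $L$ playing the role of $M$ up to an absolute constant). There is no real obstacle in this lemma: the only care required is the algebraic bookkeeping just described; the substantive work of controlling the bias inner product and the $\|x_k\|^2$ growth is deferred to the subsequent analysis that combines this lemma with Lemma~\ref{sec_SGD_sc:lem_xk_bounded} and the mixing-time estimate of Assumption~\ref{assump:mix}.
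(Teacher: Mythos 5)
Your proposal is correct and follows essentially the same route as the paper: expanding $\|x_{k+1}-x\|^2$ via the update rule is algebraically identical to the paper's three-point identity $\alpha_k\langle G(x_k;\xi_k),x_{k+1}-x\rangle=\tfrac{1}{2}[\|x-x_k\|^2-\|x-x_{k+1}\|^2-\|x_k-x_{k+1}\|^2]$, and both arguments then combine strong convexity, the splitting $G=g+(G-g)$, and Assumption~\ref{assump:Lipschitz_sample} to reach \eqref{sec_SGD_sc:lem_f_fk_bounded:ineq}. You are also right that the constant should read $M^2\alpha_k(\|x_k\|^2+1)$; the $L^2$ in the statement is a typo carried over from the paper's inconsistent use of $L$ versus $M$.
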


\begin{proof}
By the strong convexity of $f$, Eq.\ \eqref{assump_sc:ineq}, we have for all $x\in\Rset^{d}$ and $g(x_{k})\in\partial f(x_{k})$
\begin{align}
f(x_{k}) - f(x) &\leq  g(x_k)^T( x_{k} -  x) - \frac{\sigma}{2}\|x_{k}-x\|^2 \notag\\
&=  \langle G(x_k;\xi_{k}), x_{k+1} -  x\rangle + \langle G(x_k;\xi_{k}), x_{k} -  x_{k+1}\rangle\notag\\  
&\qquad - \langle G(x_k;\xi_{k}) - g(x_{k}), x_{k} -  x \rangle - \frac{\sigma}{2}\|x_{k}-x\|^2.\label{sec_SGD_sc:lem_f_fk:Eq1a}
\end{align}
By \eqref{sec_SGD:alg} we have
\begin{align*}
\alpha_{k}\langle G(x_{k},\xi_{k}),x_{k+1}-x\rangle =    \langle x_{k}-x_{k+1}, x_{k+1} - x \rangle = \frac{1}{2}\left[\|x-x_{k}\|^2 - \|x-x_{k+1}\|^2 - \|x_{k}-x_{k+1}\|^2\right].
\end{align*}
Substituting the preceding relation into \eqref{sec_SGD_sc:lem_f_fk:Eq1a} yields \eqref{sec_SGD_sc:lem_f_fk_bounded:ineq}, i.e.,
\begin{align*}
f(x_{k}) - f(x) &\leq \frac{1-\sigma\alpha_{k}}{2\alpha_{k}}\|x-x_{k}\|^2 - \frac{1}{2\alpha_{k}} \|x-x_{k+1}\|^2 - \frac{1}{2\alpha_{k}} \|x_{k}-x_{k+1}\|^2\notag\\ &\qquad + \langle G(x_k;\xi_{k}), x_{k} -  x_{k+1} \rangle  - \langle G(x_k;\xi_{k}) - g(x_{k}), x_{k} -  x \rangle \notag\\
&\leq \frac{1-\sigma\alpha_{k}}{2\alpha_{k}}\|x-x_{k}\|^2 - \frac{1}{2\alpha_{k}} \|x-x_{k+1}\|^2 + \frac{\alpha_{k}\|G(x_{k};\xi_{k})\|_{*}^2}{2}\notag\\ 
&\qquad -  \langle G(x_k;\xi_{k}) - g(x_{k}), x_{k} -  x \rangle\notag\\
&\leq \frac{1-\sigma\alpha_{k}}{2\alpha_{k}}\|x-x_{k}\|^2 - \frac{1}{2\alpha_{k}} \|x-x_{k+1}\|^2 + L^2\alpha_{k}(\|x_{k}\|^2+1)\notag\\ 
&\qquad  
-  \langle G(x_k;\xi_{k}) - g(x_{k}), x_{k} -  x \rangle,
\end{align*}
where the second and last inequalities are due to the Cauchy-Schwarz inequality and \eqref{assump:Lipschitz_sample:ineq}, respectively.
\end{proof}
The next lemma provides an upper bound for the last term on the right-hand side of \eqref{sec_SGD_sc:lem_f_fk_bounded:ineq}. 
\begin{lem}\label{sec_SGD_sc:lem_bias}
Suppose that Assumptions \ref{assump:stationary}--\ref{assump:Lipschitz_sample} hold. Let $\{x_{k}\}$ be generated by \eqref{sec_SGD:alg}. Then $\forall k\geq \Kcal^*$ and $x\in\Rset^{n}$ we have
\begin{align}
\Eset\left[\langle G(x_k;\xi_{k}) - g(x_{k}), x_{k} -  x \rangle\right] \leq \left(\alpha_{k}+24M^2\alpha_{k;\tau(\alpha_{k})}\right)\Eset\left[\|x_{k}\|^2\right] + 3\alpha_{k} + 24M^2\alpha_{k;\tau(\alpha_{k})}.     \label{sec_SGD_sc:lem_bias:ineq}
\end{align}
\end{lem}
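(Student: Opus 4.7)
The plan is to exploit the geometric mixing of the Markov chain by comparing the current pair $(x_k,\xi_k)$ to the ``time-shifted'' pair $(x_{k-\tau},\xi_k)$, where $\tau \equiv \tau(\alpha_k)$; the choice of $\tau$ as the mixing time at tolerance $\alpha_k$ ensures that $\Eset[G(x_{k-\tau};\xi_k)\mid \Fcal_{k-\tau}]$ is within $\alpha_k$ of $g(x_{k-\tau})$ in dual norm (Assumption \ref{assump:mix}), while Lemma \ref{sec_SGD_sc:lem_xk_bounded} controls the iterate drift $\|x_k - x_{k-\tau}\|$ linearly in $\alpha_{k;\tau(\alpha_k)}$.

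The key algebraic step is the four-term decomposition
\begin{align*}
\langle G(x_k;\xi_k)-g(x_k),\,x_k - x\rangle
&= \underbrace{\langle G(x_k;\xi_k)-g(x_k),\,x_k - x_{k-\tau}\rangle}_{T_1} \\
&\quad + \underbrace{\langle G(x_k;\xi_k)-G(x_{k-\tau};\xi_k),\,x_{k-\tau}-x\rangle}_{T_2} \\
&\quad + \underbrace{\langle g(x_{k-\tau})-g(x_k),\,x_{k-\tau}-x\rangle}_{T_3} \\
&\quad + \underbrace{\langle G(x_{k-\tau};\xi_k)-g(x_{k-\tau}),\,x_{k-\tau}-x\rangle}_{T_4}.
\end{align*}
Here $T_1,T_2,T_3$ are ``drift'' terms that are small because $x_k$ and $x_{k-\tau}$ are close, while $T_4$ is the ``bias'' term which is controlled directly by mixing once we condition far enough back in the filtration.

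For $T_1$, I would apply Cauchy--Schwarz, use $\|G(x_k;\xi_k)-g(x_k)\|_* \leq 2M(\|x_k\|+1)$ from Assumption \ref{assump:Lipschitz_sample}, and then Lemma \ref{sec_SGD_sc:lem_xk_bounded} to get $|T_1|\leq 12M^2\alpha_{k;\tau(\alpha_k)}(\|x_k\|+1)^2 \leq 24M^2\alpha_{k;\tau(\alpha_k)}(\|x_k\|^2+1)$ after the Young-type bound $2\|x_k\|\leq \|x_k\|^2+1$; this already accounts for the $24M^2\alpha_{k;\tau(\alpha_k)}$ factor claimed in the statement. For $T_2$ and $T_3$, without a Lipschitz hypothesis on $G$ or $g$, I bound $\|G(x_k;\xi_k)-G(x_{k-\tau};\xi_k)\|_*$ and $\|g(x_{k-\tau})-g(x_k)\|_*$ each by the sum of the two linear-growth bounds from Assumption \ref{assump:Lipschitz_sample}, and invoke Lemma \ref{sec_SGD_sc:lem_xk_bounded} twice: once for the implicit factor of $\|x_k - x_{k-\tau}\|$ that appears after reorganizing these differences, and once to replace $\|x_{k-\tau}\|$ by $3\|x_k\|+2$. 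For $T_4$, I would condition on $\Fcal_{k-\tau} = \sigma(\xi_0,\dots,\xi_{k-\tau})$ so that $x_{k-\tau}$ becomes measurable, apply Assumption \ref{assump:mix} to obtain $|\Eset[T_4\mid \Fcal_{k-\tau}]|\leq \alpha_k\|x_{k-\tau}-x\|$, and again reduce $\|x_{k-\tau}\|$ to $\|x_k\|$ using Lemma \ref{sec_SGD_sc:lem_xk_bounded}; Young's inequality then converts linear $\|x_k\|$ terms into $\|x_k\|^2$ plus constants. Taking unconditional expectations and collecting yields the $\alpha_k\Eset[\|x_k\|^2]$ coefficient and the residual $3\alpha_k$.

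The main obstacle is bookkeeping rather than conceptual: the bound in the statement contains no $\|x\|$, yet Cauchy--Schwarz on $T_2,T_3,T_4$ naturally produces $\|x_{k-\tau}-x\|$, which grows with $\|x\|$. This strongly suggests that the intended use is with $x = x^*$ (the unique minimizer guaranteed by Assumption \ref{assump:sc}), so that $\|x^*\|$ is a fixed constant folded into $M$ and thence into the numerical factors $24$ and $3$. The mixing-plus-drift mechanism itself is mechanical once the decomposition is fixed; the care lies in how each $\|x\|$-dependent factor is absorbed in the chain of inequalities without blowing up the constants.
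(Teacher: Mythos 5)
Your $T_1$ and its bound coincide exactly with the paper's treatment of the first piece of its (two-term) decomposition, and your instinct to shift the argument of $G$ back to $x_{k-\tau}$ before conditioning is the more scrupulous way to invoke Assumption~\ref{assump:mix}: the paper instead keeps $\langle G(x_k;\xi_k)-g(x_k),\,x_{k-\tau(\alpha_k)}-x\rangle$ intact and applies the mixing bound directly to $\Eset[G(x_k;\xi_k)-g(x_k)\mid\Fcal_{k-\tau(\alpha_k)}]$, treating $x_k$ as if it were $\Fcal_{k-\tau(\alpha_k)}$-measurable, and so never needs your $T_2$ and $T_3$. But your refinement creates terms you cannot control with the hypotheses you allow yourself. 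You propose to bound $\|G(x_k;\xi_k)-G(x_{k-\tau};\xi_k)\|_*$ ``without a Lipschitz hypothesis'' by summing the two linear-growth bounds of Assumption~\ref{assump:Lipschitz_sample}, and claim a factor of $\|x_k-x_{k-\tau}\|$ then ``appears after reorganizing these differences.'' It does not. The sum of the growth bounds gives only $\|G(x_k;\xi_k)-G(x_{k-\tau};\xi_k)\|_*\le M(\|x_k\|+\|x_{k-\tau}\|+2)$, which is of order $\|x_k\|$ with no small prefactor; paired with $\|x_{k-\tau}-x\|$ via Cauchy--Schwarz, $T_2$ is of order $\|x_k\|^2$ with no factor of $\alpha_k$ or $\alpha_{k;\tau(\alpha_k)}$ anywhere, and the same failure occurs for $T_3$. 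The only way to extract the drift factor $\|x_k-x_{k-\tau}\|$ from these differences is to assume $G(\cdot;\xi)$ and $g(\cdot)$ are Lipschitz in $x$ --- something the paper's prose gestures at but which the formal statement of Assumption~\ref{assump:Lipschitz_sample} (a growth bound) does not supply. As written, your argument for $T_2$ and $T_3$ does not close.

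On the $\|x\|$ issue you flag: you are right that the stated bound contains no $\|x\|$ while Cauchy--Schwarz produces $\|x_{k-\tau}-x\|$. The paper's own proof has the same blemish --- it bounds $\alpha_k\|x_{k-\tau(\alpha_k)}-x\|$ by $2\alpha_k\|x_k\|+2\alpha_k$ citing Lemma~\ref{sec_SGD_sc:lem_xk_bounded}, which controls $\|x_k-x_{k-\tau(\alpha_k)}\|$ rather than $\|x_{k-\tau(\alpha_k)}-x\|$ --- and the lemma is indeed only invoked at $x=x^*$ in Theorem~\ref{sec_SGD_sc:thm_rate}, so your reading that $\|x^*\|$ is meant to be absorbed into the constants is fair. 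That observation is correct and is not the problem; the gap is the $T_2$/$T_3$ step.
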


\begin{proof}
Consider 
\begin{align}
\langle G(x_k;\xi_{k}) - g(x_{k}), x_{k} -  x \rangle =  \langle G(x_k;\xi_{k}) - g(x_{k}), x_{k} -  x_{k-\tau(\alpha_{k})} \rangle + \langle G(x_k;\xi_{k}) - g(x_{k}), x_{k-\tau(\alpha_{k})} -  x \rangle.
\label{sec_SGD_sc:lem_bias:Eq1}
\end{align}
We next analyze each term on the right-hand side of \eqref{sec_SGD_sc:lem_bias:Eq1a}. Let $\Fcal_{k}$ be the filtration containing all the information generated by {\sf SGD} up to iteration $k$. First, by Assumption \eqref{assump:mix} we have
\begin{align}
&\Eset\left[\langle G(x_k;\xi_{k}) - g(x_{k}), x_{k-\tau(\alpha_{k})} -  x \rangle\,|\,\Fcal_{k-\tau(\alpha_{k})}\right] = \left\langle \Eset[G(x_k;\xi_{k}) - g(x_{k})\,|\,\Fcal_{k-\tau(\alpha_{k})}], x_{k-\tau(\alpha_{k})} -  x \right\rangle\notag\\
&\leq \|x_{k-\tau(\alpha_{k})} -  x\|\left|\Eset[G(x_k;\xi_{k}) - g(x_{k})\,|\,\Fcal_{k-\tau(\alpha_{k})}]\right|\leq \alpha_{k}\|x_{k-\tau(\alpha_{k})} -  x\| \stackrel{\eqref{sec_SGD_sc:lem_xk_bounded_ineq2}}{\leq} 2\alpha_{k} \|x_{k}\| + 2\alpha_{k}\notag\\
&\leq \alpha_{k}\|x_{k}\|^2 + 3\alpha_{k},   \label{sec_SGD_sc:lem_bias:Eq1a}
\end{align}
where the third inequality we use \eqref{sec_SGD_sc:notation_K*} to have $\alpha_{k;\tau(\alpha_{k})}\leq \log(2)/M\leq 1/3$ and the last inequality is due to Cauchy-Schwarz inequality. Second, using \eqref{assump:Lipschitz_sample:ineq} we consider
\begin{align}
&\langle G(x_k;\xi_{k}) - g(x_{k}), x_{k} -  x_{k-\tau(\alpha_{k})} \rangle \leq \left\|\langle G(x_k;\xi_{k}) - g(x_{k}), x_{k} -  x_{k-\tau(\alpha_{k})} \rangle\right\|\notag\\
&\leq \left(\|G(x_k;\xi_{k})\| + \|g(x_{k})\|\right)\|x_{k} -  x_{k-\tau(\alpha_{k})}\| \leq 2M(\|x_{k}\|+1)\|x_{k} -  x_{k-\tau(\alpha_{k})}\|\notag\\
&\stackrel{\eqref{sec_SGD_sc:lem_xk_bounded_ineq2}}{\leq} 12M^2\alpha_{k;\tau(\alpha_{k})} (\|x_{k}\|+1)^2\leq 24M^2\alpha_{k;\tau(\alpha_{k})}\|x_{k}\|^2 + 24M^2\alpha_{k;\tau(\alpha_{k})}.  \label{sec_SGD_sc:lem_bias:Eq1b}
\end{align}
Taking the expectation on both sides of \eqref{sec_SGD_sc:lem_bias:Eq1} and using \eqref{sec_SGD_sc:lem_bias:Eq1a} and \eqref{sec_SGD_sc:lem_bias:Eq1b} we obtain \eqref{sec_SGD_sc:lem_bias:ineq}.
\end{proof}
We now state the main result of this section in the following theorem, where we study the convergence rate of {\sf SGD} when $f$ is strongly convex. 
\begin{thm}\label{sec_SGD_sc:thm_rate}
Suppose that Assumptions \ref{assump:stationary}--\ref{assump:sc} hold. Let $\{x_{k}\}$ be generated by \eqref{sec_SGD:alg}. Let $\alpha_{k}$ be
\begin{align}
\alpha_{k} = \frac{4}{\sigma k}.    \label{sec_SGD_sc:thm_rate:stepsize}
\end{align}
Then we have for all $k\geq \Kcal^*$, where $\Kcal^*$ is defined in \eqref{sec_SGD_sc:notation_K*}
\begin{align}
\Eset[\|x_{k+1}-x^*\|^2]\leq \frac{\Kcal^{*}(\Kcal^{*}-2)}{k^2}\Eset[\|x_{\Kcal^{*}}-x^*\|^2] + \frac{320(15M^2+1)\log(\sigma k/4)}{k\sigma}\cdot    \label{sec_SGD_sc:thm_rate:ineq}
\end{align}
\end{thm}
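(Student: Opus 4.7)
The plan is to combine Lemma~\ref{sec_SGD_sc:lem_f_fk} and Lemma~\ref{sec_SGD_sc:lem_bias} into a one-step recursion for $Y_k := \Eset[\|x_k - x^*\|^2]$, and then close the argument by induction on $k \geq \Kcal^*$ using the specific schedule $\alpha_k = 4/(\sigma k)$. I would start by applying Lemma~\ref{sec_SGD_sc:lem_f_fk} at $x = x^*$, dropping $f(x_k)-f(x^*) \geq 0$ on the left-hand side, multiplying through by $2\alpha_k$, taking expectations, and invoking Lemma~\ref{sec_SGD_sc:lem_bias} to control the cross term. This produces
\begin{align*}
Y_{k+1} &\leq (1-\sigma\alpha_k)Y_k + 2M^2\alpha_k^2\bigl(\Eset[\|x_k\|^2]+1\bigr) \\
&\quad + 2\alpha_k\bigl[(\alpha_k + 24M^2\alpha_{k;\tau(\alpha_k)})\Eset[\|x_k\|^2] + 3\alpha_k + 24M^2\alpha_{k;\tau(\alpha_k)}\bigr].
\end{align*}

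Next, I would eliminate $\Eset[\|x_k\|^2]$ via $\|x_k\|^2 \leq 2\|x_k-x^*\|^2 + 2\|x^*\|^2$ and use $\alpha_k \leq \alpha_{k;\tau(\alpha_k)}$ (since $\tau(\alpha_k)\geq 1$) together with \eqref{sec_SGD_sc:notation_K*} to absorb the $\alpha_k^2$ contributions into $\alpha_k\alpha_{k;\tau(\alpha_k)}$. Collecting constants this gives a clean contraction of the form
\begin{equation*}
Y_{k+1} \;\leq\; \bigl(1 - \sigma\alpha_k + \epsilon_k\bigr)\, Y_k \;+\; \delta_k,
\end{equation*}
with $\epsilon_k, \delta_k = O\bigl(\alpha_k\alpha_{k;\tau(\alpha_k)}\bigr)$. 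The second piece of \eqref{sec_SGD_sc:notation_K*}, namely $\tau(\alpha_k)\alpha_{k-\tau(\alpha_k)} \leq \sigma/(8(25M^2+1))$, is chosen precisely so that $\epsilon_k \leq \sigma\alpha_k / 2 = 2/k$, leaving an effective contraction of at least $1 - 2/k$.

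With $\alpha_k = 4/(\sigma k)$ we have $\sigma\alpha_k = 4/k$ and $\alpha_{k;\tau(\alpha_k)} \leq \tau(\alpha_k)\alpha_{k-\tau(\alpha_k)} = O(\log(k)/k)$, so $\delta_k = O(\log(k)/k^2)$. I would then verify by strong induction on $k \geq \Kcal^*$ the ansatz
\begin{equation*}
Y_k \;\leq\; \frac{\Kcal^*(\Kcal^*-2)}{k^2}\,Y_{\Kcal^*} + \frac{320(15M^2+1)\,\log(\sigma k/4)}{k\sigma}.
\end{equation*}
The base case $k = \Kcal^*$ is immediate from $(\Kcal^*-2)/\Kcal^* \leq 1$ and nonnegativity of the second term. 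For the inductive step I would substitute the hypothesis into the recursion and use two elementary estimates: $(1-4/k)/k^2 = (k-4)/k^3 \leq 1/(k+1)^2$ (since $(k-4)(k+1)^2 = k^3-2k^2-7k-4 \leq k^3$), which handles the homogeneous part, and $\log(\sigma(k+1)/4)/(k+1) \geq \log(\sigma k/4)/k - \log(\sigma k/4)/k^2$, which handles the particular part. The leakage from $\epsilon_k Y_k$ and the additive $\delta_k$ together are $O(\log(k)/k^2)$ and get absorbed by the deficit $4B\log(\sigma k/4)/k^2$ produced by the $(1-4/k)$ factor acting on the $\log$ term, provided $B$ is chosen as in the statement.

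The main obstacle is bookkeeping on constants. Two subtleties in particular will require care: (i) the factor $\|x^*\|^2$ that appears once $\|x_k\|^2$ is replaced by $2Y_k + 2\|x^*\|^2$ must be carried into the leading numerical constant of the $\log(k)/k$ term (the stated $320(15M^2+1)$ implicitly accommodates this, or one could state the bound in terms of $\max(1,\|x^*\|^2)$); and (ii) the inductive step requires that the extra $\epsilon_k Y_k$ leakage be strictly dominated by the margin between the left-hand side and the contracted ansatz, which is exactly the reason for the second condition in \eqref{sec_SGD_sc:notation_K*}. Once these constants line up, the induction closes and yields \eqref{sec_SGD_sc:thm_rate:ineq}.
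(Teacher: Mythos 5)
Your derivation of the one-step recursion is exactly the paper's: apply Lemma~\ref{sec_SGD_sc:lem_f_fk} at $x=x^*$, drop $f(x_k)-f(x^*)\geq 0$, control the bias term with Lemma~\ref{sec_SGD_sc:lem_bias}, replace $\|x_k\|^2$ by $2\|x_k-x^*\|^2+2\|x^*\|^2$, and use the first condition in \eqref{sec_SGD_sc:notation_K*} to absorb the $O(\alpha_k\alpha_{k;\tau(\alpha_k)})$ perturbation of the contraction factor into $\sigma\alpha_k/2$; you even correctly flag the $\|x^*\|^2$ constant that the paper silently drops. The only divergence is in how the scalar recursion is then solved: the paper multiplies by $k$, observes that the coefficient of $\Eset[\|x_k-x^*\|^2]$ becomes $\frac{\sigma k(k-2)}{8}\leq\frac{\sigma(k-1)^2}{8}$ while that of $-\Eset[\|x_{k+1}-x^*\|^2]$ is $\frac{\sigma k^2}{8}$, and telescopes the weighted sum; you unroll the same arithmetic as an induction on an explicit ansatz. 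These are the same computation in different clothing, so your route is not genuinely different in substance.

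There is, however, one concrete slip: the base case of your induction. With the ansatz indexed as you state it (a bound on $Y_k$ with $k^2$ in the denominator), the case $k=\Kcal^*$ reads $Y_{\Kcal^*}\leq\frac{\Kcal^*-2}{\Kcal^*}Y_{\Kcal^*}+\frac{B\log(\sigma\Kcal^*/4)}{\Kcal^*}$, and your justification --- ``$(\Kcal^*-2)/\Kcal^*\leq 1$ plus nonnegativity of the second term'' --- points the wrong way: it shows the homogeneous part of the right-hand side is \emph{at most} $Y_{\Kcal^*}$, whereas the base case needs the whole right-hand side to be \emph{at least} $Y_{\Kcal^*}$, which would require $2Y_{\Kcal^*}\leq B\log(\sigma\Kcal^*/4)$, something you do not know. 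The repair is to state the ansatz on $Y_{k+1}$ with $k^2$ in the denominator, exactly as in \eqref{sec_SGD_sc:thm_rate:ineq}; then the base case is a single application of the recursion, using $1-\sigma\alpha_{\Kcal^*}+\epsilon_{\Kcal^*}\leq 1-2/\Kcal^*=(\Kcal^*-2)/\Kcal^*$ together with $\delta_{\Kcal^*}\leq B\log(\sigma\Kcal^*/4)/\Kcal^*$. With the indexing fixed, your inductive step (the two elementary inequalities, plus the $3B\log(\sigma k/4)/k^2$ budget left over for the $\epsilon_kY_k$ leakage and $\delta_k$) closes, up to the same constant-level looseness in the $\sigma$-dependence that is already present in the paper's own telescoping step.
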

\begin{proof}
Taking the expectation on both sides of  \eqref{sec_SGD_sc:lem_f_fk_bounded:ineq} and using \eqref{sec_SGD_sc:lem_bias:ineq} we have for $x=x^*$, 
\begin{align*}
\Eset[f(x_{k})] - f(x^*) &\leq \frac{1-\sigma\alpha_{k}}{2\alpha_{k}}\Eset[\|x_{k}-x^*\|^2] - \frac{1}{2\alpha_{k}}\Eset[\|x_{k+1}-x^*\|^2] + M^2\alpha_{k}(\Eset\left[\|x_{k}\|^2\right]+1)\notag\\ 
&\qquad +  \left(\alpha_{k}+24M^2\alpha_{k;\tau(\alpha_{k})}\right)\Eset\left[\|x_{k}\|^2\right] + 3\alpha_{k} + 24M^2\alpha_{k;\tau(\alpha_{k})}\notag\\
&\leq \frac{1-\sigma\alpha_{k}}{2\alpha_{k}}\Eset[\|x_{k}-x^*\|^2] - \frac{1}{2\alpha_{k}}\Eset[\|x_{k+1}-x^*\|^2] \notag\\
&\qquad + (25M^2+1)\alpha_{k;\tau(\alpha_{k})}\Eset\left[\|x_{k}\|^2\right] + (3+25M^2)\alpha_{k;\tau(\alpha_{k})}\notag\\
&\leq \frac{1-\sigma\alpha_{k}}{2\alpha_{k}}\Eset[\|x_{k}-x^*\|^2] - \frac{1}{2\alpha_{k}}\Eset[\|x_{k+1}-x^*\|^2]\notag\\
&\qquad   + (50M^2+2)\alpha_{k;\tau(\alpha_{k})}\Eset\left[\|x_{k}-x^*\|^2\right] + 5(1+15M^2)\alpha_{k;\tau(\alpha_{k})},
\end{align*}
where the second inequality we use $\alpha_{k}\leq \alpha_{k;\tau(\alpha_{k})}$ and the last inequality is due to the Cauchy-Schwarz inequality. Since $f(x_{k})-f^*\geq 0$, by multiplying both sides by $k$ we obtain 
\begin{align}
0&\leq \frac{(1-\frac{\sigma}{2}\alpha_{k})k}{2\alpha_{k}}\Eset[\|x_{k}-x^*)\|^2 - \frac{k}{2\alpha_{k}}\Eset[\|x_{k+1}-x^*\|^2]\notag\\
&\qquad - \frac{\sigma k}{4}\Eset[\|x_{k}-x^*\|^2]   + (50M^2+2)k\alpha_{k;\tau(\alpha_{k})}\Eset\left[\|x_{k}-x^*\|^2\right] + 5(1+15M^2)k\alpha_{k;\tau(\alpha_{k})}.\label{sec_SGD_sc:thm_rate:eq1}
\end{align}
Recall that $\alpha_{k} = 4/\sigma k$ we have
\begin{align*}
&\frac{(1-\frac{\sigma}{2}\alpha_{k})k}{\alpha_{k}} = \frac{\sigma k(k-2)}{4} \leq \frac{\sigma (k-1)^2}{4},\qquad \forall k\geq 1.
\end{align*}
By \eqref{sec_SGD_sc:notation_K*} we have for all $k\geq \Kcal^*$
\begin{align*}
& - \frac{\sigma }{4}\Eset[\|x_{k}-x^*\|^2]   + (50M^2+2)\alpha_{k;\tau(\alpha_{k})}\Eset\left[\|x_{k}-x^*\|^2\right]  \leq 0.\\
&k\alpha_{k;\tau(\alpha_{k})}\leq k\tau(\alpha_{k})\alpha_{k-\tau(\alpha_{k})} = \frac{4k\tau(\alpha_{k})}{\sigma(k-\tau(\alpha_{k}))}\leq \frac{8\tau(\alpha_{k})}{\sigma}\cdot
\end{align*}
Thus, summing up both sides of \eqref{sec_SGD_sc:thm_rate:eq1} over $k$ and using the preceding three relations we have $\forall k\geq \Kcal^{*}$
\begin{align*}
0&\leq \frac{\sigma \Kcal^{*}(\Kcal^{*}-2)}{8} \Eset[\|x_{\Kcal^{*}}-x^*\|^2] - \frac{\sigma k^2}{8}\Eset[\|x_{k+1}-x^*\|^2] + \frac{40(15M^2+1)}{\sigma}
\sum_{t=\Kcal^{*}}^{k}\ln\left(\frac{\sigma t}{4}\right)\notag\\
&\leq \frac{\sigma \Kcal^{*}(\Kcal^{*}-2)}{8} \Eset[\|x_{\Kcal^{*}}-x^*\|^2] - \frac{\sigma k^2}{8}\Eset[\|x_{k+1}-x^*\|^2] + \frac{40(15M^2+1)\log(\sigma k/4)}{\sigma k},
\end{align*}
which implies \eqref{sec_SGD_sc:thm_rate:ineq}, i.e., for all $k\geq \Kcal^{*}$
\begin{align*}
\Eset[\|x_{k+1}-x^*\|^2]\leq \frac{\Kcal^{*}(\Kcal^{*}-2)}{k^2}\Eset[\|x_{\Kcal^{*}}-x^*\|^2] + \frac{320(15M^2+1)\log(\sigma k/4)}{k\sigma}\cdot
\end{align*}
\end{proof}

\subsection{Convergence analysis: smoothness and error bound conditions}\label{sec:SGD_eb}
In this section, we study the convergence rate of \eqref{sec_SGD:alg} when $f$ is a smooth convex function and satisfies the error bound condition. In particular, given a point $x$ we denote by $\hat{x}$ the projection of $x$ to $\Xcal^*$, i.e., 
\begin{align}
\hat{x} = \arg\min_{y\in\Xcal^*}\|x-y\|.     
\end{align}
The error bound condition is then defined as below \cite{KarimiNS2016}. 
\begin{assump}\label{assump:error_bound}
We assume that $\nabla f$ satisfies the following condition for all $x$
\begin{align}
\|\nabla f(x)\|_{*} \geq \sigma \|x-\hat{x}\|.\label{assump:error_bound:ineq}
\end{align}
\end{assump}
We note that this error bound condition is weaker than the strong convexity assumption in \eqref{assump_sc:ineq}, i.e., \eqref{assump_sc:ineq} implies \eqref{assump:error_bound:ineq}. This condition does not imply that $x^*$ is unique if it exists, i.e., if $\Xcal^*$ is nonempty then it may contains more than one element. However, the convergence rate of the classic gradient descent under the this condition is the same as the one under strong convexity, that is, they both achieve a linear convergence rate \cite{KarimiNS2016,BolteNPS2017,LuoT1993,Zhang2017,Schopfer2016}. We derive the convergence rate of {\sf SGD} under Assumption \ref{assump:error_bound} in this section, which says that $x_{k}$ converges to $x^*$ in expectation at a rate
\begin{align}
\Eset\left[\|x_{k}-\xhat_{k}\|^2\right]  \lesssim \Ocal\left(\frac{\Eset\left[\|x_{\Kcal^*}-\xhat_{\Kcal^*}\|^2\right]}{k^2} + \frac{\log(k)}{k}\right),  
\end{align}    
where $\Kcal^*$ is a positive interger defined later. Basically, this result says that $x_{k}$ converges to the optimal set $\Xcal^*$ in expectation. To achieve this result, we present some properties of \eqref{assump:error_bound:ineq} under the smooth condition of $f$ \cite{LuoT1993,BolteNPS2017,Zhang2017,KarimiNS2016,Schopfer2016}.
\begin{assump}\label{assump:smooth}
$f$ is continuously differentiable and there exists a constant $L>0$ such that 
\begin{align}
    \|\nabla f(x) - \nabla f(y)\|_{*} \leq L\|x-y\| ,\qquad \forall x,y\in\Rset^{d}.    \label{assump_smooth:ineq}
\end{align}
\end{assump} 
Under this assumption, the condition \eqref{assump:error_bound:ineq} implies the Polyak-Lojasiewicz ({\sf PL}) condition
\begin{align}
\frac{L}{2\sigma}\|\nabla f(x)\|_{*}^2 \geq f(x) - f^*,\label{assump:PL:ineq}
\end{align}
which can be derived from the following inequality using $\nabla f(\xhat) = 0$
\begin{align}
f(x) - f^* = f(x)  - f(\hat{x}) \leq \frac{L}{2}\|x-\hat{x}\|^2 \leq \frac{L}{2\sigma}\|\nabla f(x)\|^2.\label{assump:PL:ineq1} 
\end{align}
Second, the error bound condition also implies the following quadratic growth ({\sf QG}) condition \cite{KarimiNS2016,Schopfer2016}
\begin{align}
\frac{2\sigma}{L}\|x-\xhat\|^2\leq f(x) - f^*.    \label{assump:QG:ineq}
\end{align}
To derive this rate, we  consider the following two key lemmas. The first lemma is to provide an upper bound on the function value by using the Lipschitz continuity of the gradient of $f$.
\begin{lem}\label{sec_SGD_eb:lem_f_fk}
Suppose that Assumption \ref{assump:smooth} holds. Let $\{x_{k}\}$ be generated by \eqref{sec_SGD:alg}. Then we have
\begin{align}
f(x_{k+1}) &\leq f(x_{k}) -\alpha_{k}\left(1-\frac{L\alpha_{k}}{2}\right)\|\nabla f(x_k)\|_{*}^2 -\alpha_{k}\left(1-L\alpha_{k}\right)\langle\nabla f(x_k), G(x_{k};\xi_{k})-\nabla f(x_k)\rangle\notag\\    
&\qquad + 4M^2L\alpha_{k}^2\|x_{k}\|^2 + 4M^2L\alpha_{k}^2.\label{sec_SGD_eb:lem_f_fk:Ineq}
\end{align}
\end{lem}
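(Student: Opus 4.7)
The plan is to apply the standard descent lemma for $L$-smooth functions and then algebraically manipulate the resulting inequality so that the stochastic gradient $G(x_k;\xi_k)$ appears only inside the inner product with $\nabla f(x_k)$, with the remaining error term controlled by Assumption~\ref{assump:Lipschitz_sample}.

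First, I would invoke smoothness of $f$ (Assumption~\ref{assump:smooth}) to obtain the descent inequality
\begin{align*}
f(x_{k+1}) \leq f(x_k) + \langle \nabla f(x_k), x_{k+1}-x_k\rangle + \frac{L}{2}\|x_{k+1}-x_k\|^2,
\end{align*}
and then substitute the SGD update $x_{k+1}-x_k = -\alpha_k G(x_k;\xi_k)$ to get
\begin{align*}
f(x_{k+1}) \leq f(x_k) - \alpha_k \langle \nabla f(x_k), G(x_k;\xi_k)\rangle + \frac{L\alpha_k^2}{2}\|G(x_k;\xi_k)\|_*^2.
\end{align*}

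Next I would decompose $G(x_k;\xi_k) = \nabla f(x_k) + \bigl(G(x_k;\xi_k) - \nabla f(x_k)\bigr)$ in both the inner product and the squared norm. Expanding the squared norm yields the cross term $2\langle \nabla f(x_k), G(x_k;\xi_k) - \nabla f(x_k)\rangle$ plus the residual $\|G(x_k;\xi_k) - \nabla f(x_k)\|_*^2$. Collecting coefficients of $\|\nabla f(x_k)\|_*^2$ and of $\langle \nabla f(x_k), G(x_k;\xi_k) - \nabla f(x_k)\rangle$ gives exactly $-\alpha_k(1-L\alpha_k/2)$ and $-\alpha_k(1-L\alpha_k)$, which matches the statement.

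It remains to bound $\frac{L\alpha_k^2}{2}\|G(x_k;\xi_k) - \nabla f(x_k)\|_*^2$ by $4M^2 L\alpha_k^2\|x_k\|^2 + 4M^2 L\alpha_k^2$. For this I would apply the triangle inequality and $(a+b)^2 \leq 2a^2+2b^2$, then use Assumption~\ref{assump:Lipschitz_sample} on both $\|G(x_k;\xi_k)\|_*$ and $\|\nabla f(x_k)\|_*$ (which equals $g(x_k)$ under smoothness) to get $\|G(x_k;\xi_k)-\nabla f(x_k)\|_*^2 \leq 4M^2(\|x_k\|+1)^2 \leq 8M^2\|x_k\|^2 + 8M^2$. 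Multiplying by $L\alpha_k^2/2$ produces the claimed bound.

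There is no real obstacle here: every step is a direct algebraic manipulation, and the only nontrivial input is Assumption~\ref{assump:Lipschitz_sample} used on the noise term. The key choice is \emph{how} to split the coefficients so that the noise enters the bound only through $\langle\nabla f(x_k),G(x_k;\xi_k)-\nabla f(x_k)\rangle$ (to be handled later via the mixing-time argument analogous to Lemma~\ref{sec_SGD_sc:lem_bias}), with the leftover quadratic noise term absorbed into the deterministic $\|x_k\|^2$ bound via Assumption~\ref{assump:Lipschitz_sample}.
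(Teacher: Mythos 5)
Your proposal is correct and follows essentially the same route as the paper: apply the descent lemma, substitute the SGD update, expand $\|G(x_k;\xi_k)\|_*^2$ around $\nabla f(x_k)$ to isolate the coefficients $-\alpha_k(1-L\alpha_k/2)$ and $-\alpha_k(1-L\alpha_k)$, and bound the residual $\frac{L\alpha_k^2}{2}\|G(x_k;\xi_k)-\nabla f(x_k)\|_*^2$ by $8M^2(\|x_k\|^2+1)\cdot\frac{L\alpha_k^2}{2}$ via Assumption~\ref{assump:Lipschitz_sample}. The paper reaches the same $8M^2(\|x_k\|^2+1)$ bound by applying $\|a-b\|^2\le 2\|a\|^2+2\|b\|^2$ directly rather than via the triangle inequality first, which is an immaterial difference.
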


\begin{proof}
By \eqref{assump_smooth:ineq} we have
\begin{align}
f(x_{k+1}) &\leq f(x_k) + \langle\nabla f(x_k), x_{k+1} -x_{k}\rangle + \frac{L}{2}\|x_{k+1}-x_{k}\|^2\notag\\
&= f(x_k) - \alpha_{k}\langle\nabla f(x_k), G(x_{k};\xi_{k})\rangle + \frac{L\alpha_{k}^2}{2}\|G(x_{k};\xi_{k})\|_{*}^2\notag\\
&= f(x_k) - \alpha_{k}\|\nabla f(x_k)\|_{*}^2 - \alpha_{k}\langle\nabla f(x_k), G(x_{k};\xi_{k})-\nabla f(x_k)\rangle\notag\\ 
&\qquad + \frac{L\alpha_{k}^2}{2}\|\nabla f(x_{k})\|_{*}^2 + L\alpha_{k}^2\langle\nabla f(x_k), G(x_{k};\xi_{k})-\nabla f(x_k)\rangle + \frac{L\alpha_{k}^2}{2}\|G(x_{k};\xi_{k})-\nabla f(x_{k})\|_{*}^2 \notag\\
&= f(x_{k}) -\alpha_{k}\left(1-\frac{L\alpha_{k}}{2}\right)\|\nabla f(x_k)\|_{*}^2 -\alpha_{k}\left(1-L\alpha_{k}\right)\langle\nabla f(x_k), G(x_{k};\xi_{k})-\nabla f(x_k)\rangle\notag\\ 
&\qquad + \frac{L\alpha_{k}^2}{2}\|G(x_{k};\xi_{k})-\nabla f(x_{k})\|_{*}^2. \label{sec_SGD_eb:lem_f_fk:eq1}
\end{align}
By \eqref{assump:Lipschitz_sample:ineq} we have
\begin{align*}
\|G(x_{k};\xi_{k})-\nabla f(x_{k})\|_{*}^2 &\leq 2(\|G(x_{k};\xi_{k})\|_{*}^2 + \|\nabla f(x_{k})\|_{*}^2)\leq 8M^2(\|x_{k}\|^2+1).
\end{align*}
which when substituting into \eqref{sec_SGD_eb:lem_f_fk:eq1} yields \eqref{sec_SGD_eb:lem_f_fk:Ineq}.
\end{proof}
We next analyse the third term on the right-hand side of \eqref{sec_SGD_eb:lem_f_fk:Ineq}.
\begin{lem}\label{sec_SGD_eb:lem_bias}
Suppose that Assumptions \ref{assump:stationary}, \ref{assump:mix}, and \ref{assump:Lipschitz_sample} hold. Let $\{x_{k}\}$ be generated by \eqref{sec_SGD:alg}. Then  $\forall x\in\Xcal$
\begin{align}
\Eset\left[-\langle\nabla f(x_k), G(x_{k};\xi_{k})-\nabla f(x_k)\rangle\right]\leq M(24ML\alpha_{k;\tau(\alpha_{k})}+\alpha_{k})\Eset\left[\|x_{k}\|^2\right] + 4M(\alpha_{k}+6ML\alpha_{k;\tau(\alpha_{k})}). \label{sec_SGD_eb:lem_bias:ineq}
\end{align}
\end{lem}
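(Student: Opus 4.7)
The plan is to mirror the proof strategy used in Lemma~\ref{sec_SGD_sc:lem_bias}: control the Markovian bias by decomposing the inner product around the delayed iterate $x_{k-\tau(\alpha_k)}$, so that one piece decouples from $\xi_k$ up to the geometric mixing bound while the other piece is absorbed into a drift bound via Lemma~\ref{sec_SGD_sc:lem_xk_bounded}. Concretely, I would write
\begin{align*}
-\langle\nabla f(x_k),\,G(x_k;\xi_k)-\nabla f(x_k)\rangle
&= -\langle\nabla f(x_k)-\nabla f(x_{k-\tau(\alpha_k)}),\,G(x_k;\xi_k)-\nabla f(x_k)\rangle\\
&\quad -\langle\nabla f(x_{k-\tau(\alpha_k)}),\,G(x_k;\xi_k)-\nabla f(x_k)\rangle,
\end{align*}
and then treat the two summands separately.

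For the first summand, Cauchy--Schwarz together with the Lipschitz bound on $\nabla f$ (Assumption~\ref{assump:smooth}) yields $\|\nabla f(x_k)-\nabla f(x_{k-\tau(\alpha_k)})\|_{*}\leq L\|x_k-x_{k-\tau(\alpha_k)}\|$, while Assumption~\ref{assump:Lipschitz_sample} controls $\|G(x_k;\xi_k)-\nabla f(x_k)\|_{*}\leq 2M(\|x_k\|+1)$. Lemma~\ref{sec_SGD_sc:lem_xk_bounded} then caps $\|x_k-x_{k-\tau(\alpha_k)}\|$ by $6M\alpha_{k;\tau(\alpha_k)}(\|x_k\|+1)$. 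Multiplying these three estimates and using $(\|x_k\|+1)^2\leq 2\|x_k\|^2+2$ produces the $24M^2 L\,\alpha_{k;\tau(\alpha_k)}\|x_k\|^2+24M^2 L\,\alpha_{k;\tau(\alpha_k)}$ contribution visible in \eqref{sec_SGD_eb:lem_bias:ineq}.

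For the second summand, I would condition on $\Fcal_{k-\tau(\alpha_k)}$ so that $\nabla f(x_{k-\tau(\alpha_k)})$ is measurable and can be pulled out of the expectation:
\[
\Eset\!\left[-\langle\nabla f(x_{k-\tau(\alpha_k)}),\,G(x_k;\xi_k)-\nabla f(x_k)\rangle\,\big|\,\Fcal_{k-\tau(\alpha_k)}\right]
=\Big\langle\nabla f(x_{k-\tau(\alpha_k)}),\,\Eset\!\left[\nabla f(x_k)-G(x_k;\xi_k)\mid\Fcal_{k-\tau(\alpha_k)}\right]\Big\rangle.
\]
The mixing bound in Assumption~\ref{assump:mix}, applied exactly as in the proof of Lemma~\ref{sec_SGD_sc:lem_bias}, controls the inner conditional expectation by $\alpha_k$ in the dual norm, and $\|\nabla f(x_{k-\tau(\alpha_k)})\|_{*}\leq M(\|x_{k-\tau(\alpha_k)}\|+1)$ by Assumption~\ref{assump:Lipschitz_sample}. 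Swapping $\|x_{k-\tau(\alpha_k)}\|$ for at most $3\|x_k\|+2$ via Lemma~\ref{sec_SGD_sc:lem_xk_bounded} and then invoking $2\|x_k\|\leq \|x_k\|^2+1$ converts this piece into the $M\alpha_k\|x_k\|^2+4M\alpha_k$ terms of \eqref{sec_SGD_eb:lem_bias:ineq}. Taking a full expectation and summing the two contributions completes the bound.

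The main subtlety, inherited from Lemma~\ref{sec_SGD_sc:lem_bias}, is that the conditional expectation $\Eset[G(x_k;\xi_k)-\nabla f(x_k)\mid\Fcal_{k-\tau(\alpha_k)}]$ involves a random $x_k$ which itself depends on $\xi_{k-\tau(\alpha_k)},\dots,\xi_{k-1}$. As in that earlier lemma, I would invoke Assumption~\ref{assump:mix} with the convention that the geometric mixing bound carries over to this conditional setup; once this is accepted, the rest of the argument is routine arithmetic bookkeeping of the constants.
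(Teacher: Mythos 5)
Your proposal is correct and follows essentially the same route as the paper's own proof: the identical decomposition around $x_{k-\tau(\alpha_k)}$, the same conditioning on $\Fcal_{k-\tau(\alpha_k)}$ plus the mixing bound for the delayed term, and the same use of Lemma~\ref{sec_SGD_sc:lem_xk_bounded} together with the smoothness of $\nabla f$ for the difference term (note that, like the paper's proof, you need Assumption~\ref{assump:smooth} even though it is not listed in the lemma's hypotheses). The remaining discrepancies are only in the constant bookkeeping, which is already loose in the paper itself.
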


\begin{proof}
Consider
\begin{align}
&-\langle\nabla f(x_k), G(x_{k};\xi_{k})-\nabla f(x_k)\rangle\notag\\
&= \langle\nabla f(x_k) - \nabla f(x_{k-\tau(\alpha_{k})}), \nabla f(x_k) - G(x_{k};\xi_{k})\rangle + \langle\nabla f(x_{k-\tau(\alpha_{k})}), \nabla f(x_k) - G(x_{k};\xi_{k})\rangle.\label{sec_SGD_eb:lem_bias:eq1}
\end{align}
First, using \eqref{assump:mix} we have
\begin{align}
&\Eset\left[\langle\nabla f(x_{k-\tau(\alpha_{k})}), \nabla f(x_k) - G(x_{k};\xi_{k})\rangle\,|\,\Fcal_{k-\tau(\alpha_{k})}\right]\notag\\
&=\langle\nabla f(x_{k-\tau(\alpha_{k})}), \Eset\left[\nabla f(x_k) - G(x_{k};\xi_{k})\rangle\,|\,\Fcal_{k-\tau(\alpha_{k})}\right] \leq \alpha_{k}\|\nabla f(x_{k-\tau(\alpha_{k})})\|\notag\\
&\stackrel{\eqref{assump:Lipschitz_sample:ineq}}{\leq} M\alpha_{k}(\|x_{k-\tau(\alpha_{k})}\| + 1)\stackrel{\eqref{sec_SGD_sc:lem_xk_bounded_ineq2}}{\leq} 2M\alpha_{k}\|x_{k}\| + 3M\alpha_{k}\leq M\alpha_{k}\|x_{k}\|^2 + 4M\alpha_{k}.\label{sec_SGD_eb:lem_bias:eq1a}
\end{align}
Second, using \eqref{assump:Lipschitz_sample:ineq}, \eqref{sec_SGD_sc:lem_xk_bounded_ineq2}, \eqref{assump_smooth:ineq}, and \eqref{assump_smooth:ineq} we obtain
\begin{align}
&\langle\nabla f(x_k) - \nabla f(x_{k-\tau(\alpha_{k})}), \nabla f(x_k) - G(x_{k};\xi_{k})\rangle \leq L\|x_{k}-x_{k-\tau(\alpha_{k})}\|\left(\|\nabla f(x_k)\| + \|G(x_{k};\xi_{k})\|\right)\notag\\
&\stackrel{\eqref{sec_SGD_sc:lem_xk_bounded_ineq2}}{\leq} 2ML(\|x_{k}\| + 1)\left(6M\alpha_{k;\tau(\alpha_{k})}\|x_{k}\| + 6M\alpha_{k;\tau(\alpha_{k})}\right)\leq 24M^2L\alpha_{k;\tau(\alpha_{k})}\|x_{k}\|^2 + 24M^2L\alpha_{k;\tau(\alpha_{k})}.\label{sec_SGD_eb:lem_bias:eq1b}
\end{align}
Taking the expectation on both sides of \eqref{sec_SGD_eb:lem_bias:eq1} and using \eqref{sec_SGD_eb:lem_bias:eq1a} and \eqref{sec_SGD_eb:lem_bias:eq1b} yields \eqref{sec_SGD_eb:lem_bias:ineq}.
\end{proof}
We now present the main result in this section, which is the convergence rate of ${x_{k}}$ in expectation under Assumption \ref{assump:error_bound}.  First, we consider the choice of step sizes $\alpha_{k}$ as
\begin{align}
\alpha_{k} = \frac{\alpha_{0}}{k},\qquad \text{where }     \alpha_{0} = \min\left\{\frac{1}{2L};\,\frac{2L}{\sigma}\right\}.\label{sec_SGD_eb:stepsizes}  
\end{align}
Moreover, with some abuse of notation, we consider in this section  $\Kcal^*$ satisfying
\begin{align}
\left\{\begin{array}{l}
\tau(\alpha_{k})\alpha_{k-\tau(\alpha_{k})}\leq \max\left\{\frac{\log(2)}{M},\;\frac{\sigma}{L\sigma + 4M + 104M^2L}\right\}\\
\ln\left(\frac{k}{\alpha_{0}}\right)\leq \frac{k}{2}
\end{array}\right.\qquad \forall k \geq \Kcal^*,   \label{sec_SGD_eb:notation_K*}
\end{align} 
which also satisfies \eqref{sec_SGD:notation_K*}. The main result in this section is stated in the following theorem.

\begin{thm}\label{sec_SGD_eb:thm}
Suppose that Assumptions \ref{assump:optimal_set}--\ref{assump:Lipschitz_sample}, \ref{assump:error_bound}, \ref{assump:smooth} hold. Let $\{x_{k}\}$ be generated by \eqref{sec_SGD:alg}. In addition, let $\alpha_{k}$ satisfy \eqref{sec_SGD_eb:stepsizes} and  $\Kcal^*$ be defined in \eqref{sec_SGD_eb:notation_K*}. We denote by $D_{1}$ and $D_{2}$ two constants as
\begin{align}
    \begin{aligned}
D_{1} &= 4M^2L + 4M + (2M + 8M^2L)\max_{x\in\Xcal^*}\|x\|^2\\
D_{2} &= 24M^2L + 48M^2L\max_{x\in\Xcal^*}\|x\|^2.  
    \end{aligned}\label{sec_SGD_eb:thm:constants}
\end{align}
Then we have for all $k\geq\Kcal^*$
\begin{align}
\Eset\left[\|x_{k+1} - \xhat_{k+1}\|^2\right] \leq \frac{L^2(\Kcal^*-1)^2\Eset\left[\|x_{\Kcal^*}-\xhat_{\Kcal^*}\|^2\right]}{4\sigma k^2}   + \frac{L\alpha_{0}^2D_{1}}{2\sigma k} + \frac{L\alpha_{0}D_{2}\ln\left(\frac{k}{\alpha_{0}}\right)}{\sigma k},\label{sec_SGD_eb:thm:ineq}
\end{align}
where recall that $\xhat_{k}$ is the projection of $x_{k}$ to $\Xcal^*$.
\end{thm}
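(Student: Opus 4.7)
The plan is to combine the descent inequality of Lemma~\ref{sec_SGD_eb:lem_f_fk} with the bias bound of Lemma~\ref{sec_SGD_eb:lem_bias} into a one-step inequality for $\Eset[f(x_{k+1})-f^*]$, promote it to a scalar recursion using the {\sf PL} inequality \eqref{assump:PL:ineq} and the quadratic-growth bound \eqref{assump:QG:ineq}, unroll the recursion for the prescribed step size $\alpha_k=\alpha_0/k$, and finally translate back to $\Eset[\|x_{k+1}-\xhat_{k+1}\|^2]$ via one last application of {\sf QG}. The reverse smoothness bound \eqref{assump:PL:ineq1}, applied at the starting iterate $x_{\Kcal^*}$, supplies the composed constant $L^2/(4\sigma)=(L/2)\cdot(L/(2\sigma))$ that multiplies $\Eset[\|x_{\Kcal^*}-\xhat_{\Kcal^*}\|^2]$ in \eqref{sec_SGD_eb:thm:ineq}.

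Concretely, I would first take expectations in \eqref{sec_SGD_eb:lem_f_fk:Ineq} and plug in Lemma~\ref{sec_SGD_eb:lem_bias} with prefactor $\alpha_k(1-L\alpha_k)\in[\alpha_k/2,\alpha_k]$ (using $\alpha_k\leq\alpha_0\leq 1/(2L)$), and then use {\sf PL} to replace $\Eset[\|\nabla f(x_k)\|_*^2]$ by $(2\sigma/L)\Eset[f(x_k)-f^*]$. The only nonconstant ``bad'' terms left are multiples of $\alpha_k^2\Eset[\|x_k\|^2]$ and $\alpha_k\alpha_{k;\tau(\alpha_k)}\Eset[\|x_k\|^2]$; I would split $\|x_k\|^2\leq 2\|x_k-\xhat_k\|^2+2\max_{x\in\Xcal^*}\|x\|^2$ by the triangle inequality and use {\sf QG} to bound $\Eset[\|x_k-\xhat_k\|^2]\leq (L/(2\sigma))\Eset[f(x_k)-f^*]$. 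The threshold $\tau(\alpha_k)\alpha_{k-\tau(\alpha_k)}\leq \sigma/(L\sigma+4M+104M^2L)$ in \eqref{sec_SGD_eb:notation_K*} is calibrated exactly so that the resulting $\alpha_k\alpha_{k;\tau(\alpha_k)}\Eset[f(x_k)-f^*]$ and $\alpha_k^2\Eset[f(x_k)-f^*]$ terms are absorbed into the {\sf PL} contraction, leaving a clean scalar recursion of the schematic form $\Eset[f(x_{k+1})-f^*]\leq(1-\sigma\alpha_k/L)\Eset[f(x_k)-f^*]+\alpha_k^2 D_1+2\alpha_k\alpha_{k;\tau(\alpha_k)}D_2$ with $D_1,D_2$ as in \eqref{sec_SGD_eb:thm:constants}.

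Unrolling then mirrors the strongly convex proof of Theorem~\ref{sec_SGD_sc:thm_rate}: multiply through by $k^2$, use the factorization $k(k-\sigma\alpha_0/L)\leq(k-1)^2$ afforded by the choice of $\alpha_0$ in \eqref{sec_SGD_eb:stepsizes} to telescope the homogeneous part down to $(\Kcal^*-1)^2\Eset[f(x_{\Kcal^*})-f^*]/k^2$, and bound the forcing sum using $\alpha_{k;\tau(\alpha_k)}\leq 2\tau(\alpha_k)\alpha_k\leq 2\alpha_0\ln(k/\alpha_0)/k$ together with $\sum_{t=\Kcal^*}^{k}1/t\leq\ln(k/\alpha_0)$ and the second line of \eqref{sec_SGD_eb:notation_K*}. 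A final {\sf QG} step converts $\Eset[f(x_{k+1})-f^*]$ into $\Eset[\|x_{k+1}-\xhat_{k+1}\|^2]$, and \eqref{assump:PL:ineq1} converts the initial $\Eset[f(x_{\Kcal^*})-f^*]$ back into $\Eset[\|x_{\Kcal^*}-\xhat_{\Kcal^*}\|^2]$, producing \eqref{sec_SGD_eb:thm:ineq} with the stated coefficients.

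The main obstacle is the absorption step: every occurrence of $\Eset[\|x_k\|^2]$ in the raw bound — both the explicit $4M^2L\alpha_k^2\Eset[\|x_k\|^2]$ coming from Lemma~\ref{sec_SGD_eb:lem_f_fk} and the $M(24ML\alpha_{k;\tau(\alpha_k)}+\alpha_k)\Eset[\|x_k\|^2]$ coming from Lemma~\ref{sec_SGD_eb:lem_bias} — must be carefully split between a contractive contribution to $\Eset[f(x_k)-f^*]$ (to be absorbed via \eqref{sec_SGD_eb:notation_K*}) and a constant contribution proportional to $\max_{x\in\Xcal^*}\|x\|^2$ (assembling into the specific $D_1,D_2$ of \eqref{sec_SGD_eb:thm:constants}). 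The numerical constant $104M^2L$ in the mixing-time threshold is precisely the worst-case sum of the coefficients that must be absorbed, and it is here that the interplay between smoothness, the error-bound condition, and the geometric mixing of $\{\xi_k\}$ is tightest.
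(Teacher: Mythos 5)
Your plan is correct and follows essentially the same route as the paper's proof: descent lemma plus bias lemma, splitting $\|x_k\|^2\leq 2\|x_k-\xhat_k\|^2+2\max_{x\in\Xcal^*}\|x\|^2$ to form $D_1,D_2$, absorbing the residual terms via the $\Kcal^*$ threshold, multiplying by $k^2$ and telescoping, and converting back with \eqref{assump:QG:ineq} and \eqref{assump:PL:ineq1} to get the $L^2/(4\sigma)$ prefactor. The only (cosmetic) difference is that you pass to $\Eset[f(x_k)]-f^*$ immediately via {\sf PL} and handle the $\|x_k-\xhat_k\|^2$ terms with {\sf QG}, whereas the paper first keeps the error-bound term $-\sigma\alpha_k\Eset[\|x_k-\xhat_k\|^2]$, absorbs into it, and only then converts to function values.
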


\begin{proof} 
By \eqref{sec_SGD_eb:stepsizes} we have $(1-L\alpha_{k})\in(0,1)$. Taking the expectation on both sides of \eqref{sec_SGD_eb:lem_f_fk:Ineq} and using \eqref{sec_SGD_eb:lem_bias:ineq} we obtain 
\begin{align*}
\Eset\left[f(x_{k+1})\right] &\leq \Eset\left[f(x_{k})\right] -\alpha_{k}\left(1-\frac{L\alpha_{k}}{2}\right)\Eset\left[\|\nabla f(x_k)\|_{*}^2\right] +M\alpha_{k}\left(1-L\alpha_{k}\right)(24ML\alpha_{k;\tau(\alpha_{k})}+\alpha_{k})\Eset\left[\|x_{k}\|^2\right] \notag\\    
&\qquad + 4M\alpha_{k}\left(1-L\alpha_{k}\right)(\alpha_{k}+6ML\alpha_{k;\tau(\alpha_{k})}) + 4M^2L\alpha_{k}^2\|x_{k}\|^2 + 4M^2L\alpha_{k}^2\notag\\
&\stackrel{\eqref{assump:error_bound:ineq}}{\leq} \Eset\left[f(x_{k})\right] -\sigma\alpha_{k}\left(1-\frac{L\alpha_{k}}{2}\right)\Eset\left[\|x_{k}-\xhat_{k}\|^2\right] + 4M(ML+1)\alpha_{k}^2 + 24M^2L\alpha_{k;\tau(\alpha_{k})}\alpha_{k} \notag\\   
&\qquad + M(1+4ML)\alpha_{k}^2 \Eset\left[\|x_{k}\|^2\right] + 24M^2L\alpha_{k;\tau(\alpha_{k})}\alpha_{k}\Eset\left[\|x_{k}\|^2\right],
\end{align*}
which by using the Cauchy-Schwarz inequality $\|x+y\|^2 \leq 2\|x\|^2 + 2\|y\|^2$ gives
\begin{align}
\Eset\left[f(x_{k+1})\right]
&\leq \Eset\left[f(x_{k})\right] -\sigma\alpha_{k}\left(1-\frac{L\alpha_{k}}{2}\right)\Eset\left[\|x_{k}-\xhat_{k}\|^2\right] + 4M(ML+1)\alpha_{k}^2 + 24M^2L\alpha_{k;\tau(\alpha_{k})}\alpha_{k} \notag\\   
&\qquad + 2M(1+4ML)\alpha_{k}^2 \Eset\left[\|x_{k}-\xhat_{k}\|^2\right] + 48M^2L\alpha_{k;\tau(\alpha_{k})}\alpha_{k}\Eset\left[\|x_{k}-\xhat_{k}\|^2\right]\notag\\   
&\qquad + 2M(1+4ML)\alpha_{k}^2 \Eset\left[\|\xhat_{k}\|^2\right] + 48M^2L\alpha_{k;\tau(\alpha_{k})}\alpha_{k}\Eset\left[\|\xhat_{k}\|^2\right]\notag\\
&\leq \Eset\left[f(x_{k})\right] -\sigma\alpha_{k}\left(1-\frac{L\alpha_{k}}{2}\right)\Eset\left[\|x_{k}-\xhat_{k}\|^2\right] + D_{1}\alpha_{k}^2 + D_{2}\alpha_{k;\tau(\alpha_{k})}\alpha_{k} \notag\\   
&\qquad + 2M(1+4ML)\alpha_{k}^2 \Eset\left[\|x_{k}-\xhat_{k}\|^2\right] + 48M^2L\alpha_{k;\tau(\alpha_{k})}\alpha_{k}\Eset\left[\|x_{k}-\xhat_{k}\|^2\right].\label{sec_SGD_eb:thm:eq1}
\end{align}
where $D_{1}$ and $D_{2}$ are defined in \eqref{sec_SGD_eb:thm:constants}. By \eqref{sec_SGD_eb:notation_K*} we have for all $k\geq\Kcal^*$
\begin{align*}
\left(\frac{L\sigma}{2} + 2M(1+4ML)\right)\alpha_{k} + 48M^2L \tau(\alpha_{k})\alpha_{k-\tau(\alpha_{k})}\leq \frac{L\sigma + 4M + 104M^2L}{2}\tau(\alpha_{k})\alpha_{k-\tau(\alpha_{k})} \leq \frac{\sigma}{2}.    
\end{align*}
Then by \eqref{sec_SGD_eb:thm:eq1} we have for all $k\geq\Kcal^*$
\begin{align*}
\Eset\left[f(x_{k+1})\right]
&\leq \Eset\left[f(x_{k})\right] -\frac{\sigma}{2}\alpha_{k}\Eset\left[\|x_{k}-\xhat_{k}\|^2\right] + D_{1}\alpha_{k}^2 + D_{2}\alpha_{k;\tau(\alpha_{k})}\alpha_{k},
\end{align*}
which by substracting both sides by $f^*$ and using \eqref{assump:PL:ineq} yields
\begin{align}
\Eset\left[f(x_{k+1})\right] - f^*
&\leq \Eset\left[f(x_{k})\right] - f^* -\frac{\sigma}{L}\alpha_{k}\left(\Eset\left[f(x_{k})\right] - f^*\right) + D_{1}\alpha_{k}^2 + D_{2}\alpha_{k;\tau(\alpha_{k})}\alpha_{k}\notag\\
&= \left(1-\frac{\sigma}{L}\alpha_{k}\right)\left(\Eset\left[f(x_{k})\right] - f^*\right) + D_{1}\alpha_{k}^2 + D_{2}\alpha_{k;\tau(\alpha_{k})}\alpha_{k}.\label{sec_SGD_eb:thm:eq1a}
\end{align}
Using the choice of step sizes in \eqref{sec_SGD_eb:stepsizes} we have
\begin{align*}
k^2 \left(1-\frac{\sigma}{L}\alpha_{k}\right) = k^2 \left(1-\frac{\sigma}{L}\frac{\alpha_{0}}{k}\right) \leq  k^2\left(1-\frac{2}{k}\right)\leq (k-1)^2.
\end{align*}
In addition, by \eqref{sec_SGD_eb:notation_K*} we also have
\begin{align*}
k\alpha_{k;\tau(\alpha_{k})} \leq \frac{\alpha_{0}k\tau(\alpha_{k})}{k-\tau(\alpha_{k})}\leq 2\alpha_{0}\tau(\alpha_{k}) = 2\alpha_{0}\ln\left(\frac{k}{\alpha_{0}}\right)     
\end{align*}
Thus, by multiplying both sides of \eqref{sec_SGD_eb:thm:eq1} by $k^2$ and using the preceding two relations we obtain for all $k\geq\Kcal^*$
\begin{align*}
k^2\left(\Eset\left[f(x_{k+1})\right] - f^*\right) &\leq (k-1)^2 \left(\Eset\left[f(x_{k})\right] - f^*\right) + \alpha_{0}^2D_{1} + 2\alpha_{0}D_{2}\ln\left(\frac{\alpha_{0}}{k}\right)\notag\\
&\leq (\Kcal^*-1)^2 \left(\Eset\left[f(x_{\Kcal^*})\right]- f^*\right)  + (k-\Kcal^*) \alpha_{0}^2D_{1}  + 2\alpha_{0}D_{2}\sum_{t=\Kcal^*}^{k-1}\ln\left(\frac{t}{\alpha_{0}}\right)\notag\\
&\stackrel{\eqref{assump:PL:ineq1}}{\leq} \frac{L(\Kcal^*-1)^2\Eset\left[\|x_{\Kcal^*}-\xhat_{\Kcal^*}\|^2\right]}{2}   + k\alpha_{0}^2D_{1} + 2\alpha_{0}D_{2}k\ln\left(\frac{k}{\alpha_{0}}\right),
\end{align*}
which by using \eqref{assump:QG:ineq} and multiplying both sides by $L/2\sigma k^2$ gives us \eqref{sec_SGD_eb:thm:ineq}.
\end{proof}

\subsection{Convergence analysis: smoothness and nonconvexity}\label{sec:SGD_nc}
In this section, we study the convergence rate of \eqref{sec_SGD:alg} when $f$ is a smooth nonconvex function, i.e., $f$ satisfies Assumption \ref{assump:smooth}. In particular, we study the convergence to a stationary point of \eqref{sec_optimization:prob} in expectation, which happens at a rate
\begin{align*}
\Eset\left[\|\nabla f(x_R)\|_{*}^2\right] 
\leq \Ocal\left(\frac{\Eset\left[\|x_{\Kcal^*} - \xhat_{\Kcal^*}\|^2\right]}{\sqrt{T}}\right) + \Ocal\left(\frac{\ln^2(T)}{\sqrt{T}}\right),  
\end{align*}
where $T$ is some predetermined positive integer, and $x_{R}$ is randomly selected from the sequence $\{x_{k}\}$, for $k\in[\Kcal^*,T]$,  with some probability. This random rule of choosing the return point $x_{R}$ is adopted from \cite{GhadimiL2013} for nonconvex optimization problems.

We now proceed to show our main result in this section. Given a positive integer $T$, consider the following choice of step sizes
\begin{align}
\alpha_{k} = \frac{\alpha_{0}}{\sqrt{k}},\qquad \text{for all }k\in[1,T],     \label{sec_SGD_nc:stepsizes}
\end{align}
and $\alpha_{0}$ satisfies
\begin{align}
0< \alpha_{0} \leq \max\left\{\max_{k\in[1,T]}\left\{ \frac{\sqrt{k+1}-\sqrt{k}}{2M}\right\}; \frac{1}{L}\right\}\cdot     \label{sec_SGD_nc:stepsizes:alpha0}
\end{align}
Moreover, we denote by $\delta$ 
\begin{align}
    \delta = \max_{k\in[1,T]} \left\{\frac{\sqrt{k}+M\alpha_{0}}{\sqrt{k+1}}\right\}  \in (0,1).\label{sec_SGD_nc:notation_delta}
\end{align}
Finally, let $\Kcal^*$ be a positive integer such that
\begin{align}
\tau(\alpha_{k})\alpha_{k-\tau(\alpha_{k})}\leq \frac{\log(2)}{M}\quad \text{and }\quad \ln\left(\frac{\sqrt{k}}{\alpha_{0}}\right)\leq \min\left\{\delta^{k/2},\;\frac{\alpha_{0}}{\sqrt{k}}\right\},\quad \forall k \geq \Kcal^*.     \label{sec_SGD_nc:notation_K*}
\end{align}
Under these conditions, we next consider the following lemma, which provide upper bounds for the time $\alpha$-weighted average of the iterates generated by \eqref{sec_SGD:alg}. 
\begin{lem}\label{sec_SGD_nc:lem_xk_bound}
Suppose that Assumption \ref{assump:Lipschitz_sample} holds. Let $\{x_{k}\}$ be generated by \eqref{sec_SGD:alg}. Given a positive integer $T\geq \Kcal^*$, let $\alpha_{k}$ be chosen in \eqref{sec_SGD_nc:stepsizes}. Then we have for all $k\in [\Kcal^*,T]$
\begin{align}
&\sum_{k=\Kcal^*}^{T}\alpha_{k}^2\|x_{k}\|^2 \leq D_{3} \triangleq  \frac{3\alpha_{0}^2}{1-\delta^2} + \frac{3M^2\alpha_{0}^4}{(1-\delta)^3} + \frac{24 M^2\alpha_{0}^2}{(1-\delta)^2}\cdot \label{sec_SGD_nc:lem_xk_bound:ineq1}\\
&\sum_{k=\Kcal^*}^{T}  \alpha_{k;\tau(\alpha_{k})}\alpha_{k}\|x_{k}\|^{2}\leq D_{4}\triangleq  \frac{3\sqrt{2}\alpha_{0}^2}{1-\delta} + \frac{3\sqrt{2}M^2\alpha_{0}^4}{(1-\delta)(1-\sqrt{\delta})} + \frac{24\sqrt{2} M^2\alpha_{0}^2}{(1-\delta)^2}\cdot\label{sec_SGD_nc:lem_xk_bound:ineq2}
\end{align}
\end{lem}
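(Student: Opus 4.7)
The plan is to derive a one-step contraction for the scaled norm $y_k := \alpha_k\|x_k\|$, and then to combine this recursion with Cauchy--Schwarz to control the two sums.

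First I would start from the update \eqref{sec_SGD:alg} and Assumption \ref{assump:Lipschitz_sample}, which together with the triangle inequality give $\|x_{k+1}\| \leq (1+M\alpha_k)\|x_k\| + M\alpha_k$. Multiplying both sides by $\alpha_{k+1}$, using the identity $\alpha_{k+1}/\alpha_k = \sqrt{k/(k+1)}$, and bounding $\alpha_{k+1}\leq \alpha_k$ on the residual term, I obtain
\begin{align*}
\alpha_{k+1}\|x_{k+1}\| \;\leq\; \frac{\sqrt{k}+M\alpha_0}{\sqrt{k+1}}\,\alpha_k\|x_k\| + M\alpha_k^2 \;\leq\; \delta\,\alpha_k\|x_k\| + M\alpha_k^2,
\end{align*}
where the last inequality invokes the definition of $\delta\in(0,1)$ in \eqref{sec_SGD_nc:notation_delta}. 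This is the crucial contraction: $y_k$ decays geometrically up to an additive perturbation of order $M\alpha_k^2$.

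Unrolling this recursion from $\Kcal^*$ to $k$ gives $y_k \leq \delta^{k-\Kcal^*}y_{\Kcal^*} + M\sum_{j=\Kcal^*}^{k-1}\delta^{k-1-j}\alpha_j^2$. Squaring and applying $(a+b)^2\leq 2a^2+2b^2$, then summing over $k\in[\Kcal^*,T]$, the initial-condition piece contributes a closed geometric series in $\delta^{2}$, producing an $\Ocal(1/(1-\delta^2))$ factor. The perturbation piece is handled by writing $\delta^{k-1-j}=\delta^{(k-1-j)/2}\cdot\delta^{(k-1-j)/2}$ and applying Cauchy--Schwarz to peel off one geometric factor, then exchanging the order of summation to turn the double sum into $(1-\delta)^{-2}\sum_j\alpha_j^4$. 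Because $\alpha_j = \alpha_0/\sqrt{j}$, the series $\sum_j \alpha_j^4 = \alpha_0^4\sum_j 1/j^2$ is absolutely convergent, which delivers a bound of the form $D_3$ in \eqref{sec_SGD_nc:lem_xk_bound:ineq1}.

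For the second sum, I would first bound $\alpha_{k;\tau(\alpha_k)}\leq \tau(\alpha_k)\,\alpha_{k-\tau(\alpha_k)}\leq \sqrt{2}\,\tau(\alpha_k)\,\alpha_k$, using that the condition \eqref{sec_SGD_nc:notation_K*} forces $\tau(\alpha_k)\leq k/2$ so that $\sqrt{k/(k-\tau(\alpha_k))}\leq \sqrt{2}$. This reduces $\sum_k \alpha_{k;\tau(\alpha_k)}\alpha_k\|x_k\|^2$ to an expression resembling $\sqrt{2}\sum_k \tau(\alpha_k)\,y_k^2$, which is handled by the same unrolling argument except that the $\tau(\alpha_k)$ factor forces one to sum geometric progressions in $\sqrt{\delta}$ rather than $\delta$ (so that $\tau(\alpha_k)\cdot\sqrt{\delta}^{\,k}$ remains summable), which is precisely where the $(1-\sqrt{\delta})^{-1}$ factor in $D_4$ emerges.

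The main obstacle is bookkeeping: tracking the exact powers of $(1-\delta)$ and $(1-\sqrt{\delta})$ in the denominators requires a careful interleaving of Young's/Cauchy--Schwarz inequalities with the geometric summation, and ensuring at each step that the convolution $\sum_{j}\delta^{k-1-j}\alpha_j^p$ can be exchanged cleanly to expose a convergent series in $\alpha_0$. The initial term $y_{\Kcal^*}^2 = \alpha_{\Kcal^*}^2\|x_{\Kcal^*}\|^2$ must also be absorbed into the universal constants of $D_3$ and $D_4$, which is possible because $\alpha_{\Kcal^*}\leq \alpha_0$ and $\|x_{\Kcal^*}\|$ can itself be bounded inductively from the contraction starting at an earlier index.
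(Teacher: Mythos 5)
Your proposal follows essentially the same route as the paper: the same geometric contraction $\alpha_{k+1}\|x_{k+1}\|\leq \delta\,\alpha_k\|x_k\|+M\alpha_k\alpha_{k+1}$ obtained from \eqref{sec_SGD_sc:lem_xk_bounded_Eq1a} and the definition of $\delta$, the same unrolling and squaring, and the same reduction $\alpha_{k;\tau(\alpha_k)}\leq\sqrt{2}\,\tau(\alpha_k)\alpha_k$ leading to the $(1-\sqrt{\delta})^{-1}$ factor. The only difference is cosmetic: you sum the convolution $\sum_j\delta^{k-1-j}\alpha_j^2$ via Cauchy--Schwarz on the geometric kernel, whereas the paper splits that sum at $j=\lceil k/2\rceil$; both yield bounds of the stated form with comparable constants.
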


\begin{proof}
Using \eqref{sec_SGD_sc:lem_xk_bounded_Eq1a}, $\alpha_{k} = \alpha_{0}/\sqrt{k}$, and \eqref{sec_SGD_nc:notation_delta} we have for all $k\geq [\Kcal^*,T]$
\begin{align*}
\alpha_{k+1}\|x_{k+1}\| &\leq \alpha_{k+1}(1+M\alpha_{k})\|x_{k}\| + M\alpha_{k}\alpha_{k+1} = \left(\frac{\sqrt{k}+M\alpha_{0}}{\sqrt{k+1}}\right)\alpha_{k}\|x_{k}\| + M\alpha_{k}\alpha_{k+1}\notag\\
&\leq \delta\alpha_{k}\|x_{k}\| + M\alpha_{k}\alpha_{k+1} \leq \delta^{k}\alpha_{0}\|x_{1}\| + M\sum_{t=1}^{k}\alpha_{k}^2\delta^{k-t}\notag\\
&= \delta^{k}\alpha_{0}\|x_{1}\| + M\sum_{t=1}^{\lfloor k/2 \rfloor}\alpha_{k}^2\delta^{k-t} + M\sum_{t=\lceil k/2\rceil}^{k}\alpha_{k}^2\delta^{k-t}\notag\\
&\leq \delta^{k}\alpha_{0}\|x_{1}\| + M\alpha_{0}^2\sum_{t=1}^{\lfloor k/2 \rfloor}\delta^{k-t} + M\alpha_{\lceil k/2\rceil}^2\sum_{t=\lceil k/2\rceil}^{k}\delta^{k-t}\notag\\
&\leq \delta^{k}\alpha_{0}\|x_{1}\| + \frac{M\alpha_{0}^2}{1-\delta}\delta^{\lceil k/2\rceil} + \frac{M}{1-\delta}\alpha_{\lceil k/2\rceil}^2,
\end{align*}
which using the Cauchy-Schwarz inequality gives
\begin{align*}
\alpha_{k+1}^2\|x_{k+1}\|^{2}\leq 3\alpha_{0}^2\|x_{1}\|^2\delta^{2k} + \frac{3M^2\alpha_{0}^{4}}{(1-\delta)^2}\delta^{k+1} + \frac{12 M^2\alpha_{0}^2}{(1-\delta)^2}\frac{1}{k^2}\cdot
\end{align*}
Summing up both sides from $k = \Kcal^*,\ldots,T$ yields \eqref{sec_SGD_nc:lem_xk_bound:ineq1} 
\begin{align*}
\sum_{k=\Kcal^*}^{T}\alpha_{k}^2\|x_{k}\|^2 &\leq   \sum_{k=\Kcal^*}^{T} 3\alpha_{0}^2\|x_{1}\|^2\delta^{2k} + \sum_{k=\Kcal^*}^{T} \frac{3M^2\alpha_{0}^{4}}{(1-\delta)^2}\delta^{k+1}  + \sum_{k=\Kcal^*}^{T} \frac{12 M^2\alpha_{0}^2}{(1-\delta)^2}\frac{1}{k^2} \notag\\
&\leq \frac{3\alpha_{0}^2}{1-\delta^2} + \frac{3M^2\alpha_{0}^4}{(1-\delta)^3} + \frac{24 M^2\alpha_{0}^2}{(1-\delta)^2}\cdot
\end{align*}
Similarly, we consider
\begin{align*}
&\alpha_{k+1;\tau(\alpha_{k+1})}\alpha_{k+1}\|x_{k+1}\|^{2}\leq \frac{\alpha_{k+1;\tau(\alpha_{k+1})}}{\alpha_{k+1}}\alpha_{k+1}^2\|x_{k+1}\|^{2}\leq \frac{\tau(\alpha_{k+1})\alpha_{k+1-\tau(\alpha_{k+1})}}{\alpha_{k+1}}\alpha_{k+1}^2\|x_{k+1}\|^{2}\notag\\
&= \frac{\tau(\alpha_{k+1})\sqrt{k+1}}{\sqrt{k+1-\tau(\alpha_{k+1})}}\alpha_{k+1}^2\|x_{k+1}\|^{2}\leq \sqrt{2}\tau(\alpha_{k+1})\alpha_{k+1}^2\|x_{k+1}\|^{2},
\end{align*}
which by using \eqref{sec_SGD_nc:notation_K*} gives \eqref{sec_SGD_nc:lem_xk_bound:ineq2}
\begin{align*}
&\sum_{k=\Kcal^*}^{T}  \alpha_{k;\tau(\alpha_{k})}\alpha_{k}\|x_{k}\|^{2}\leq  \sqrt{2}\sum_{k=\Kcal^*}^{T}  \tau(\alpha_{k})\alpha_{k}^2\|x_{k}\|^{2}\notag\\
&\leq \sum_{k=\Kcal^*}^{T} 3\sqrt{2}\alpha_{0}^2\|x_{1}\|^2\delta^{k} + \sum_{k=\Kcal^*}^{T} \frac{3M^2\alpha_{0}^{4}}{(1-\delta)^2}\delta^{k/2}  + \sum_{k=\Kcal^*}^{T} \frac{12 M^2\alpha_{0}^2}{(1-\delta)^2}\frac{1}{k^{3/2}}\notag\\
&\leq  \frac{3\sqrt{2}\alpha_{0}^2}{1-\delta} + \frac{3\sqrt{2}M^2\alpha_{0}^4}{(1-\delta)(1-\sqrt{\delta})} + \frac{24\sqrt{2} M^2\alpha_{0}^2}{(1-\delta)^2}\cdot
\end{align*}

\end{proof}

We now state the main result in this section, which is the rate of {\sf SGD} when $f$ is nonconvex and smooth. To do it, we adopt the randomized stopping rule in \cite{GhadimiL2013}, that is, given a sequence $\{x_{k}\}$ generated by {\sf SGD} we derive the convergence of $x_{R}$, a random point selected from this sequence. More detail is given in the following theorem.  
\begin{thm}\label{sec_SGD_nc:thm_rate}
Suppose that Assumptions \ref{assump:stationary}-- \ref{assump:Lipschitz_sample}, and \ref{assump:smooth} hold. Let $\{x_{k}\}$ be generated by \eqref{sec_SGD:alg}. Given a positive integer $T\geq \Kcal^*$, let $\alpha_{k}$ satisfies \eqref{sec_SGD_nc:stepsizes}. 
In addition, we randomly select $x_{R}$ from the sequence $\{x_{k}\}$, for $k\in[\Kcal^*,T]$, with probability $P_{R}(\cdot)$ defined as
\begin{align}
P_{R}(k) = \Pset(R=k) = \frac{2\alpha_{k}-L\alpha_{k}^2}{\sum_{k=\Kcal^*}^{T}(2\alpha_{k}-L\alpha_{k}^2)}\cdot     \label{sec_SGD_nc:thm_rate:P_R}
\end{align}
Then we have 
\begin{align}
\Eset\left[\|\nabla f(x_R)\|_{*}^2\right] 
&\leq \frac{2L\Eset\left[\|x_{\Kcal^*} - \xhat_{\Kcal^*}\|^2\right] + 4D_{3}M(4ML+1) + 96D_{4}M^2L}{3\alpha_{0}\sqrt{T}}\notag\\ 
&\qquad  + \frac{ 4\alpha_{0}^2(1+\ln(T)) + 2\ln^2\left(\frac{T}{\alpha_{0}}\right)}{3\alpha_{0}\sqrt{T}},\label{sec_SGD_nc:thm_rate:ineq}    
\end{align}
where recall that $\xhat$ is the projection of $x$ to the optimal set $\Xcal^*$.
\end{thm}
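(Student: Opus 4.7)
The proof will combine the smooth descent lemma from the error-bound section with the Markov bias lemma established there, then apply a randomized-stopping argument. Specifically, taking expectation in \eqref{sec_SGD_eb:lem_f_fk:Ineq} and substituting the bias bound \eqref{sec_SGD_eb:lem_bias:ineq} for the cross term, then using $\alpha_0\leq 1/L$ from \eqref{sec_SGD_nc:stepsizes:alpha0} so that $1-L\alpha_k\in(0,1]$, yields a one-step inequality of the form
\begin{equation*}
\alpha_k\!\left(1-\frac{L\alpha_k}{2}\right)\Eset\!\left[\|\nabla f(x_k)\|_*^2\right]\leq \Eset[f(x_k)]-\Eset[f(x_{k+1})]+C_1\alpha_k^2\bigl(\Eset[\|x_k\|^2]+1\bigr)+C_2\alpha_{k;\tau(\alpha_k)}\alpha_k\bigl(\Eset[\|x_k\|^2]+1\bigr),
\end{equation*}
for explicit constants $C_1,C_2$ depending only on $M$ and $L$.

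Summing this inequality from $k=\Kcal^*$ to $T$ telescopes the function values. The initial term $\Eset[f(x_{\Kcal^*})]-f^*$ is controlled by Lipschitz smoothness together with $\nabla f(\xhat_{\Kcal^*})=0$, giving $f(x_{\Kcal^*})-f^*\leq (L/2)\|x_{\Kcal^*}-\xhat_{\Kcal^*}\|^2$, which produces the first term in \eqref{sec_SGD_nc:thm_rate:ineq}. The four residual sums are then handled as follows: $\sum \alpha_k^2\|x_k\|^2\leq D_3$ via \eqref{sec_SGD_nc:lem_xk_bound:ineq1}; $\sum \alpha_{k;\tau(\alpha_k)}\alpha_k\|x_k\|^2\leq D_4$ via \eqref{sec_SGD_nc:lem_xk_bound:ineq2}; $\sum \alpha_k^2\leq \alpha_0^2(1+\ln T)$ by a standard harmonic estimate; and $\sum \alpha_{k;\tau(\alpha_k)}\alpha_k$ is bounded by $O(\ln^2(T/\alpha_0))$ using $\alpha_{k;\tau(\alpha_k)}\leq \tau(\alpha_k)\alpha_{k-\tau(\alpha_k)}$ together with $\tau(\alpha_k)=\log(\sqrt{k}/\alpha_0)$ and $\sum_k k^{-1}\log k\sim \log^2 k$.

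The normalization step converts the left-hand side into $\Eset[\|\nabla f(x_R)\|_*^2]$: by the definition \eqref{sec_SGD_nc:thm_rate:P_R},
\begin{equation*}
\Eset\bigl[\|\nabla f(x_R)\|_*^2\bigr]=\frac{\sum_{k=\Kcal^*}^T(2\alpha_k-L\alpha_k^2)\,\Eset\!\left[\|\nabla f(x_k)\|_*^2\right]}{\sum_{k=\Kcal^*}^T(2\alpha_k-L\alpha_k^2)},
\end{equation*}
and since $L\alpha_k\leq 1$ we have $2\alpha_k-L\alpha_k^2\geq \alpha_k$, so the denominator is bounded below by $\alpha_0\sum_{k=\Kcal^*}^T 1/\sqrt{k}\geq 3\alpha_0\sqrt{T}$ for $T$ sufficiently large relative to $\Kcal^*$. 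Combining the numerator estimate with this denominator bound yields \eqref{sec_SGD_nc:thm_rate:ineq}.

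The main obstacle is the accumulation of the Markov-chain bias: the cross term $\langle\nabla f(x_k),G(x_k;\xi_k)-\nabla f(x_k)\rangle$ does not vanish in expectation as under i.i.d.\ sampling, and controlling it requires the past-shift trick to $\xi_{k-\tau(\alpha_k)}$ already encoded in Lemma \ref{sec_SGD_eb:lem_bias}. This shift introduces $\|x_k-x_{k-\tau(\alpha_k)}\|$ terms, which Lemma \ref{sec_SGD_nc:lem_xk_bound} is specifically designed to control without assuming bounded iterates, yielding the extra $\ln^2(T/\alpha_0)$ factor that is the hallmark of Markov finite-time bounds.
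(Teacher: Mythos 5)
Your proposal follows essentially the same route as the paper: take expectations in Lemma \ref{sec_SGD_eb:lem_f_fk}, insert the bias bound of Lemma \ref{sec_SGD_eb:lem_bias}, telescope the sum over $k\in[\Kcal^*,T]$, control the residual sums with Lemma \ref{sec_SGD_nc:lem_xk_bound} and the harmonic and $\ln^2$ estimates, bound the initial gap by $\frac{L}{2}\|x_{\Kcal^*}-\xhat_{\Kcal^*}\|^2$, and normalize by the weights defining $x_R$. Apart from minor constant-factor bookkeeping in the lower bound on the normalizing sum (which the paper handles equally loosely), the argument matches.
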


\begin{proof}
Taking the expectation of  \eqref{sec_SGD_eb:lem_f_fk:Ineq} and using \eqref{sec_SGD_eb:lem_bias:ineq} we obtain
\begin{align*}
\Eset\left[f(x_{k+1})\right] &\leq \Eset\left[f(x_{k})\right] -\alpha_{k}\left(1-\frac{L\alpha_{k}}{2}\right)\Eset\left[\|\nabla f(x_k)\|_{*}^2\right] + 4M^2L\alpha_{k}^2\Eset\left[\|x_{k}\|^2\right] + 4M^2L\alpha_{k}^2\notag\\
&\qquad + M\alpha_{k}\left(1-L\alpha_{k}\right)(24ML\alpha_{k;\tau(\alpha_{k})}+\alpha_{k})\Eset\left[\|x_{k}\|^2\right]\notag\\ 
&\qquad + 4M\alpha_{k}\left(1-L\alpha_{k}\right)(\alpha_{k}+6ML\alpha_{k;\tau(\alpha_{k})})\notag\\
&\leq \Eset\left[f(x_{k})\right] -\alpha_{k}\left(1-\frac{L\alpha_{k}}{2}\right)\Eset\left[\|\nabla f(x_k)\|_{*}^2\right]  + 4M(ML+1)\alpha_{k}^2 + 24M^2L\alpha_{k;\tau(\alpha_{k})}\alpha_{k} \notag\\
&\qquad + M(4ML+1)\alpha_{k}^2\Eset\left[\|x_{k}\|^2\right] + 24M^2L\alpha_{k;\tau(\alpha_{k})}\alpha_{k}\Eset\left[\|x_{k}\|^2\right],
\end{align*}
which by summing up both sides over $k = \Kcal^*,\ldots, T$ and reorganizing we obtain \begin{align}
&\sum_{k = \Kcal^*}^{T}   \alpha_{k}\left(1-\frac{L\alpha_{k}}{2}\right)\Eset\left[\|\nabla f(x_k)\|_{*}^2\right]\notag\\
&\leq \Eset\left[f(x_{\Kcal^*}) - f(x_{T+1})\right]  + \sum_{k=\Kcal^*}^{T}\left(4M(ML+1)\alpha_{k}^2 + 24M^2L\alpha_{k;\tau(\alpha_{k})}\alpha_{k}\right)\notag\\
&\qquad + \sum_{k=\Kcal^*}^{T}\left[M(4ML+1)\alpha_{k}^2\Eset\left[\|x_{k}\|^2\right] + 24M^2L\alpha_{k;\tau(\alpha_{k})}\alpha_{k}\Eset\left[\|x_{k}\|^2\right]\right]\notag\\
&\leq \Eset\left[f(x_{\Kcal^*}) - f^*\right]  + \sum_{k=\Kcal^*}^{T}\left(4M(ML+1)\alpha_{k}^2 + 24M^2L\alpha_{k;\tau(\alpha_{k})}\alpha_{k}\right)\notag\\
&\qquad + D_{3}M(4ML+1) + 24D_{4}M^2L,\label{sec_SGD_nc:thm_rate:Eq1}
\end{align}
where $D_{3}$ and $D_{4}$ are given in \eqref{sec_SGD_nc:lem_xk_bound:ineq1} and \eqref{sec_SGD_nc:lem_xk_bound:ineq2}, respectively. Note that, by \eqref{sec_SGD_nc:stepsizes} we have
\begin{align*}
&\sum_{k = \Kcal^*}^{T}   \alpha_{k}\left(1-\frac{L\alpha_{k}}{2}\right)\geq \frac{3}{4}\sum_{k = \Kcal^*}^{T}   \alpha_{k} = \frac{3}{4}\sum_{k = \Kcal^*}^{T}\frac{\alpha_{0}}{\sqrt{k}} \geq \frac{3\alpha_{0}\sqrt{T}}{4}\cdot\notag\\
&\sum_{k=\Kcal^*}^{T}\alpha_{k}^2 = \sum_{k=\Kcal^*}^{T}\frac{\alpha_{0}^2}{k}\leq \alpha_{0}^2(1+\ln(T)).\notag\\
&\sum_{k=\Kcal^*}^{T}\alpha_{k;\tau(\alpha_{k})}\alpha_{k}\leq \sum_{k=\Kcal^*}^{T}\frac{\alpha_{0}^2\ln(\tau(\alpha_{k}))}{\sqrt{k}\sqrt{k-\ln(\tau(\alpha_{k}))}}\leq \sum_{k=\Kcal^*}^{T}\frac{2\alpha_{0}^2\ln(\tau(\alpha_{k}))}{k}\leq \frac{\ln^2\left(\frac{T}{\alpha_{0}}\right)}{2}\cdot
\end{align*}
Diving both sides of \eqref{sec_SGD_nc:thm_rate:Eq1} by $\sum_{k = \Kcal^*}^{T}   \alpha_{k}\left(1-\frac{L\alpha_{k}}{2}\right)$ and using the preceding three relations yield 
\begin{align*}
&\frac{\sum_{k = \Kcal^*}^{T}  \alpha_{k}\left(1-\frac{L\alpha_{k}}{2}\right)\Eset\left[\|\nabla f(x_k)\|_{*}^2\right]}{\sum_{k = \Kcal^*}^{T}   \alpha_{k}\left(1-\frac{L\alpha_{k}}{2}\right)}\notag\\ 
&\leq \frac{4\Eset\left[f(x_{\Kcal^*}) - f^*\right] + 4D_{3}M(4ML+1) + 96D_{4}M^2L}{3\alpha_{0}\sqrt{T}} + \frac{ 4\alpha_{0}^2(1+\ln(T)) + 2\ln^2\left(\frac{T}{\alpha_{0}}\right)}{3\alpha_{0}\sqrt{T}}\notag\\
&\leq \frac{2L\Eset\left[\|x_{\Kcal^*} - \xhat_{\Kcal^*}\|^2\right] + 4D_{3}M(4ML+1) + 96D_{4}M^2L}{3\alpha_{0}\sqrt{T}} + \frac{ 4\alpha_{0}^2(1+\ln(T)) + 2\ln^2\left(\frac{T}{\alpha_{0}}\right)}{3\alpha_{0}\sqrt{T}},    
\end{align*}
where the last inequlaity is due to the Lipschitz continuity of $\nabla f$. Using the definition of $x_{R}$ in \eqref{sec_SGD_nc:thm_rate:P_R} immediately yields \eqref{sec_SGD_nc:thm_rate:ineq}.
\end{proof}




\bibliographystyle{IEEEtran}
\bibliography{refs}
\end{document}